\newtheorem{theorem}{Theorem}[section]
\newtheorem{proposition}[theorem]{Proposition}
\newtheorem{corollary}[theorem]{Corollary}
\theoremstyle{definition}
\newtheorem{definition}[theorem]{Definition}
\newtheorem{remark}[theorem]{Remark}
\numberwithin{equation}{section}
\begin{document}

\baselineskip=15.5pt

\title[Brauer and Picard groups of moduli of parabolic bundles]{Brauer
and Picard groups of moduli spaces of parabolic vector bundles on a real curve}

\author[Bhosle U. N.]{Usha N. Bhosle}

\address{Statistics and Mathematics Unit, Indian Statistical Institute,
Bangalore 560059, India}

\email{usnabh07@gmail.com}

\author[Biswas I.]{Indranil Biswas}

\address{School of Mathematics, Tata Institute of Fundamental
Research, Homi Bhabha Road, Mumbai 400005, India}

\email{indranil@math.tifr.res.in}

\subjclass[2010]{14D20, 14F05}

\keywords{Real curve; parabolic bundle; moduli space; Brauer group; Picard group}

\date{}

\begin{abstract}
We determine the Brauer group and Picard group of the moduli space $U^{' s}_{L,par}$ of stable parabolic vector bundles
of rank $r$ with determinant $L$ on a real curve $Y$ of arithmetic genus $g \,\ge\, 2$ with at most nodes as singularities. 
\end{abstract}

\maketitle

\tableofcontents

\section{Introduction}

For any variety there is the associated Brauer group and the Picard group. Computation of these groups for various moduli spaces have 
been carried out in numerous works. We recall that the Brauer group $Br (Z)$ of a scheme $Z$ is defined by the equivalence classes of 
the Azumaya algebras over $Z$. Equivalently, $Br (Z)$ is defined by the Morita equivalence classes of \'etale locally trivial projective 
bundles on $Z$.

Here we consider an irreducible projective curve $Y$, with at most nodes as singularities, defined over the field of real numbers. Fix a 
finite subset of real nonsingular points on $Y$. Also, fix a real point $L$ of the Picard variety of $Y$. Let $U^{' s}_{L, par}$ denote 
the moduli space of stable parabolic bundles $E_*$ on $Y$ with parabolic structure, over the chosen points, of fixed numerical type 
satisfying the condition that the vector bundle underlying $E_*$ has determinant $L$. Our main aim is to compute the Brauer group of 
$U^{' s}_{L,par}$. This is carried out in Theorem \ref{brauergroup}.

We also compute the Picard group of $U^{' s}_{L,par}$; this is carried out in Theorem \ref{picUYspar}.

In Section \ref{codimpar}, we estimate the codimension of the complement of stable loci in the moduli spaces of parabolic
bundles and in the moduli stacks of parabolic bundles. 
In Section \ref{nonlocallyfree}, we estimate the codimension of the locus of non-locally free sheaves in the moduli space of parabolic 
sheaves. These estimates find use in the above mentioned computations.

\section{Moduli of parabolic bundles over a real nodal curve} \label{realcurve}

\subsection{Notation}

Let $Y$ be a geometrically irreducible projective algebraic curve of arithmetic genus $g$,
defined over ${\mathbb R}$, with at most nodes as singularities. We assume that
$g\, \geq\, 2$. Let $p: X\, 
\longrightarrow\, Y$ be the normalization of $Y$. Let $X_{\mathbb C} \,=\, X 
\times_{\mathbb R} \mathbb{C}$ (respectively, $Y_{\mathbb C}\,=\, Y\times_{\mathbb R}
{\mathbb C}$) be the irreducible projective 
complex algebraic curve obtained from $X$ (respectively, $Y$) by base change to ${\mathbb 
C}$.

The complex conjugation $\sigma\,:\, {\mathbb C}\,\longrightarrow\, {\mathbb C}$,
$c\, \longmapsto\, \overline{c}$, induces
involutive antiholomorphic automorphisms 
$$\sigma_X\,:\, X_{\mathbb C} \,\longrightarrow\, X_{\mathbb C},\ \ \, \sigma_Y\,:\,
Y_{\mathbb C}\,\longrightarrow\, Y_{\mathbb C}$$ 
such that $p\circ\sigma_X\,=\, \sigma_Y\circ p$. 
The real points of $Y$ (respectively, $X$) are precisely the fixed points of the
$\sigma_Y$ (respectively, $\sigma_X)$. We assume that $Y$ (and hence $X$) has nonsingular
real points.

The points of ${\rm Pic}(Y_{\mathbb C})$ correspond to line bundles $\xi$ on $Y_{\mathbb C}$.
For a line bundle $\xi$, any homomorphism $\eta\,:\, \xi \,
\stackrel{\cong}{\longrightarrow}\, \sigma_Y^* \overline{\xi}$ produces a homomorphism
$$\overline{\sigma_Y^*\eta}\, :\, \overline{\sigma_Y^*\xi}\,=\, \sigma_Y^* \overline{\xi}
\, \longrightarrow\, \overline{\sigma^*_Y\sigma_Y^* \overline{\xi}}\,=\, \xi\, .
$$
A line bundle $\xi$ is called \textit{real} if there is an isomorphism $\eta\,:\, \xi \,
\stackrel{\cong}{\longrightarrow}\, \sigma_Y^* \overline{\xi}$
such that $\overline{\sigma_Y^*\eta} \circ \eta \,=\, {\rm Id}_{\xi}$.
The real points of the Picard variety ${\rm Pic}(Y)$ correspond to
the line bundles $\xi$ on $Y_{\mathbb C}$ such that $\sigma_Y^*\overline{\xi}$ is isomorphic
to $\xi$. Every such $\xi$ has a real structure (recall that $Y$ has real points).

\subsection{Parabolic sheaves}\label{parasheaves}

Let $I$ be a finite subset of the locus of smooth real points of $Y$. 

\begin{definition} \label{parastructure}
A \textit{quasi-parabolic structure} at a point $x \,\in\, I$ on a torsionfree sheaf $E$
of rank $r$ on $Y$ is a flag of real vector subspaces on the fiber $E_x$ 
\begin{equation} \label{flagtype}
E_x\,= \,F_1(E_x)\,\supset\,\cdots\,\supset\, F_{l_x}(E_x)\,\supset\, F_{{l_x}+1}(E_x)\,=\, 0\, .
\end{equation}
If $c^x_i \,=\, \dim F_i(E_x)$, then
\begin{equation}\label{nx}
\overline{n}(x)\,=\, (c^x_1,\, \cdots,\, c^x_{l_x})
\end{equation}
is called the type of the flag at $x$. Note that $c^x_1\,=\, r$.

A \textit{parabolic structure} on $E$ over $I$ is a quasiparabolic structure as above at
every $x \,\in\, I$ together with a rational number $\alpha_i(x)$, called
a parabolic weight, for each subspace $F_i(E_x)$ such that
$$0 \, < \, \alpha_1(x)\, < \,\cdots \,< \,\alpha_{l_x}(x) \,< \,1.$$

Let $k_i(x) \,=\, \dim F_{i}(E_x) - \dim F_{i+1}(E_x), i\,=\, 1,\, \cdots,\, l_x$, and
\begin{equation}\label{rk}
r_i(x) \,=\, \dim E_x-c^x_i\,=\,\dim E_x - \dim F_i(E_x),\ i \,\ge\, 2;
\end{equation}
the above integer $k_i(x)$ is called the multiplicity of the parabolic weight $\alpha_i(x)$.
 Define 
$$\overline{\alpha}(x)\,:=\, (\alpha_1(x),\, \alpha_2(x),\, \cdots ,\, \alpha_{l_x}(x)),\ \
\overline{k}(x) \,:=\, (k_1(x),\, k_2(x),\, \cdots ,\, k_{l_x}(x))\, .$$

A parabolic torsionfree sheaf $E_*$ on Y with parabolic structure over $I$ 
is a torsionfree sheaf $E$ on $Y$ together with a parabolic structure on $I$. 
A parabolic torsionfree sheaf $E_*$ is called a parabolic vector bundle if
$E$ is locally free.
\end{definition}

For a parabolic torsionfree sheaf $E_*\, :=\, (E\, ,\{F_i(x)\}\, , 
\{\alpha_i(x)\})$ as above, the parabolic degree is defined to be
$$
\mbox{par-deg}(E_*)\, :=\, \text{degree}(E)+\sum_{x \in I}
\sum_{i=1}^{l_x} \alpha_i (x) k_i(x)\, .
$$
The parabolic slope is defined to be
$$
\text{par-}\mu(E_*)\, :=\,
\frac{\mbox{par-deg}(E_*)}{\text{rank}(E)}\,=\, \frac{\mbox{par-deg}(E_*)}{r}
\, \in\, {\mathbb Q}\, .
$$

\begin{definition}
For a parabolic torsionfree sheaf $E_*$, any nonzero subsheaf $F\, \subset\, E$
has an induced parabolic structure. We denote by $F_*$ the sheaf $F$ equipped with the parabolic structure
induced by $E_*$. 

A parabolic torsionfree sheaf $E_*$ is called \textit{stable} (respectively,
\textit{semistable}) if
$$
\text{par-}\mu(F_*)\, <\, \text{par-}\mu(E_*)
$$
(respectively, $\text{par-}\mu(F_*)\,\leq\, \text{par-}\mu(E_*)$)
for all subsheaves $F$ of $E$ with $1\, \leq\, \text{rank}(F)
\, <\, \text{rank}(E)$.
\end{definition}

For each $x \,\in\, I$, let $P_x \,\subset\,{\rm SL}(r, {\mathbb C})$ be the parabolic
subgroup that preserves a fixed 
filtration of type $\overline{n}(x)$ of subspaces of $\mathbb{C}^r$ (see \eqref{nx}). Let
\begin{equation}\label{F}
{\textbf F} \,:=\, \prod_{x\in I} \ {\rm SL}(r, \mathbb{C})/P_x \, .
\end{equation}
The variety ${\bf F}$ is smooth complete and rational \cite{BR}. 
Also,
$${\rm Pic}({\bf F})\, =\, \mathbb{Z}^{\sum_{x\in I}(l_x - 1)}\, .$$
We will describe a set of generators of ${\rm Pic}({\bf F})$. For each
$1\leq\, j\, \leq\, l_x$, let
$$f_j\,:\, {\rm SL}(r, {\mathbb C})/P_x\, \longrightarrow\, {\mathbb P}(\bigwedge\nolimits^{c^x_i} {\mathbb C}^r)$$ be the
morphism that sends any filtration
$$
{\mathbb C}^r \,=\, V_1\,\supset\, V_2 \, \cdots\,\supset\, V_{l_x}\,\supset\, V_{{l_x}+1}\,=\, 0$$
to the line $\bigwedge\nolimits^{c^x_j} V_j\, \subset\, \bigwedge\nolimits^{c^x_j} {\mathbb C}^r$. Define
$\xi^x_j\,:=\, f^*_j {\mathcal O}_{{\mathbb P}(\wedge^{c^x_i} {\mathbb C}^r)}(1)$.
For any $x\, \in\, I$, the group ${\rm Pic}({\rm SL}(r, \mathbb{C})/P_x)$ is generated by $\xi^x_j$,
$j \,=\, 2,\, \cdots,\, l_x$. So the group ${\rm Pic}({\bf F})$ is generated by $\xi^x_j$,
$x \,\in\, I,\ j \,=\, 2,\, \cdots,\, l_x$.

\subsection{Moduli stacks and moduli spaces of parabolic sheaves}

If $E_*$ is a semistable parabolic vector bundle over $Y$, then $E_{*, \mathbb{C}} \,=\, 
E_*\otimes _{\mathbb R} \mathbb{C}$ on $Y_{\mathbb C}$ is semistable. For a stable parabolic vector bundle 
$E_*$ over $Y$, the vector bundle $E_{*, \mathbb{C}}$ is polystable, but it may not be stable.

\begin{definition}
A parabolic vector bundle $E_*$ over $Y$ is called geometrically stable if the parabolic vector
bundle $E_{*, \mathbb{C}} $ over $Y_{\mathbb C}$ is stable.
\end{definition}

Fix integers $r\,\ge\, 2$ and $d$ together with a point $L\,\in\, {\rm Pic}^d(
Y)$. So $L$ is a real line bundle on $Y_{\mathbb C}$. Let 
$M_{par}(r,\,d)$ (respectively, $M_{par}(r,\, L)$) denote the moduli stack of parabolic vector 
bundles of rank $r$ and degree $d$ (respectively, with a fixed determinant $L$) on $Y$. Both 
of them are irreducible smooth stacks. Let $M^{gs}_{par} (r,\,d)\,\subset\, M_{par}(r,\,d)$ and 
$M^{gs}_{par} (r,\,L)\,\subset\, M_{par}(r,\,L)$ be the open substacks of geometrically stable 
parabolic vector bundles. Let $U^{'s}_{par}(r,\,d)$ and $U^{'s}_{par}(r,\,L)$ respectively 
denote their moduli spaces; these are quasi-projective smooth varieties. 

Let $U_{Y, par}(r,\,d)$ denote the moduli space of semistable parabolic sheaves of rank $r$ degree $d$ on $Y$ with a parabolic structure 
of fixed type over all $x\,\in\, I\,\subset\, Y$. Then $U_{Y, par}(r,\,d)$ is a projective seminormal variety
\cite[Theorem 1.1]{Su}. Let $U'_{Y, par}(r,\,d)$ 
denote its open subvariety corresponding to parabolic vector bundles, it is a normal quasi-projective variety. Let $U'_{Y, par}(r,\, L)$ 
denote its normal closed subvariety corresponding to parabolic vector bundles $E_*$ with a fixed determinant $det E = L$. We denote by 
$U_{Y, par}(r,\, L)$ its closure in $U_{Y, par}(r,\, d)$ with a reduced structure, we do not know if it is normal. We shall check that the 
singular set of $U_{Y, par}(r,\, L)$ has codimension at least $3$ (see Theorem \ref{codimnonfree}).

\section{Codimension of the complement of the stable locus} \label{codimpar}

\begin{definition} \label{localtype}
Let $E_{(y)}$ denote the stalk, at a node $y \,\in\, Y$, of a torsionfree sheaf $E$ of rank $r$ 
on $Y$. Then 
$$E_{(y)} \ \cong \ \mathcal{O}_y^{\oplus a(E)} \oplus m_y^{\oplus b(E)}\, ,$$
for some integers $a(E)$ and $b(E)$ with $0 \,\le\, a(E),\, b(E) \,\le\, r$, where $m$ denotes the maximal ideal
for $y$. We will call the integer $b(E)$ the local type of $E$ at $y$.

If $b_j(E)$ is the local type of $E$ at $y_j$ for $j\,=\, 1,\, \cdots,\, m$, then the $m$-tuple $\overline{b}(E) \,=\,
(b_1(E),\, \cdots, \,b_m(E))$ is called the local type of $E$. 
\end{definition}

We fix integers $r$ and $d$ such that $r\,\ge \,1$ and $d \,>>\, 0$. Let $Quot$ denote the quot scheme of quotients of
$\mathcal{O}_Y^n$ with Hilbert 
polynomial $P(m)\,=\, m r + d+r(1-g)$; set $P(0) \,=\, n$. Let
$$ \mathcal{O}_{Quot \times Y}^n\,\longrightarrow\, {\mathcal E} \,\longrightarrow\, 0$$
be the universal quotient sheaf on $Quot \times Y$.

Let $R \,\subset\, Quot$ be the subset corresponding to the torsionfree sheaves $E$ such that $H^1(E) \,=\, 0$ and $H^0(E)
\,\cong\, {\mathbb R}^n$. Let $R^0 \,\subset\, R$ denote the open subset corresponding to the locally free sheaves $E$. The subset $R^0$ is
irreducible and nonsingular \cite[Remark after Theorem 5.3']{N}. We have $\dim R^0 \,=\, r^2(g-1) +1 + {\rm dim \ PGL} (n)$.

Fix a finite subset of points $I$ in $Y$. Define 
$$
Q_{par} \,:=\, \times_{\stackrel{Quot}{x \in I}} \ {Flag}_{\overline{n}(x)} {\mathcal E}_x\, ,
$$
the fiber product over $Quot$ of relative flag scheme of type $\overline{n}(x)$. The fiber of $Q_{par}$ over a quotient $E \,\in\, Quot$
is the variety of flags of type $\overline{n}(x)$ in the vector space $E_x$. 
Let $R^0_{par}\,\longrightarrow\, R^0$ be the restriction of $Q_{par}$ to $R^0$, that is,
$$
R^0_{par} \,=\,\times_{\stackrel{R^0}{x \in I}} \ {Flag}_{\overline{n}(x)} ({\mathcal E}\vert_{x \times R^0}).
$$
Since $R^0$ is nonsingular, and the relative flag scheme is irreducible and nonsingular, it follows that
$R^0_{par}$ is irreducible and nonsingular. 

Let $R^0_{L, par}$ denote the subset of $R^0_{par}$ consisting of all quotients $E$ with determinant a fixed
line bundle $L$. Then $R^0_{L, par}$ is irreducible and nonsingular. Let $R^{0, ss}_{L, par}$ (respectively,
$R^{0, s}_{L, par}$) denote the subset of $R^0_{L, par}$ corresponding to the semistable (respectively, stable)
vector bundles. We can similarly define $R_{par},\, R_{L, par}$ and the subset $R^{ss}_{L, par}$ (respectively,
$R^{s}_{L, par}$) corresponding to the semistable (respectively, stable) torsionfree sheaves.

We first prove the following result which is of independent interest and is also needed later. We assume that $I$ is nonempty. For $Y$ 
smooth, this result is known. Our proof is on similar lines as those in \cite[Appendix]{NR} (for r\,=\,2) and
\cite[Proposition 5.1]{Su} (for $r\,\ge\, 2$). However, several modifications of the proof are needed for generalization to nodal curves.

\begin{theorem} \label{codimRpars}
Let $g \,\ge\, 2$.
$$(1) \ \ {\rm codim} \ (R^{0,ss}_{par} \setminus R^{0,s}_{par}, \,R^{0,ss}_{par}) \ \ge \ (r-1)(g-1) +1\, . $$ 
$$ (2) \ \ {\rm codim} \ (R^0_{par} \setminus R^{0,ss}_{par}, \, R^{0}_{par}) \ \ge \ (r-1)(g-1) +1\, . $$ 
$$ (3) \ \ {\rm codim} \ (R^{0,ss}_{L, par} \setminus R^{0,s}_{L, par},\, R^{0,ss}_{L, par}) \ \ge \ (r-1)(g-1) +1\, . 
$$ 
$$ (4) \ \ {\rm codim} \ (R^0_{L, par} \setminus R^{0,ss}_{L, par}, \, R^{0}_{L, par}) \ \ge \ (r-1)(g-1) +1\, . $$ 
\end{theorem}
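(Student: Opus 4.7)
The plan is to adapt the stratification argument of Narasimhan--Ramanan \cite{NR} and Sun \cite[Proposition 5.1]{Su} to the nodal, parabolic setting. I would prove (2) first, then (1), and finally deduce (3) and (4) by descending through the determinant map.

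For (2), I would stratify $R^0_{par} \setminus R^{0,ss}_{par}$ by Harder--Narasimhan type. A point in this locus has a canonical parabolic H--N filtration $0 \,=\, E^0_* \,\subset\, E^1_* \,\subset\, \cdots \,\subset\, E^k_* \,=\, E_*$ with strictly decreasing parabolic slopes; its \emph{type} $P$ records, for each step, the rank $r_i$, degree $d_i$, the induced flag type at each $x \in I$, and (for nodal $Y$) the local types of the quotient $E^i_*/E^{i-1}_*$ at the nodes in the sense of Definition \ref{localtype}. For fixed $P$, the stratum $S_P$ admits a fibration over the product of parabolic quot schemes for the graded pieces, with fibres iterated parabolic extensions governed by $\text{Ext}^1_{par}(E^i_*/E^{i-1}_*,\, E^j_*/E^{j-1}_*)$ for $i > j$. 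Summing dimensions, the codimension of $S_P$ in $R^0_{par}$ is bounded below by the sum of parabolic Hom dimensions for the pairs $i < j$ together with an Euler-characteristic correction coming from parabolic Riemann--Roch. The smooth-curve argument then has to be adjusted so that the parabolic Euler characteristics correctly account for local types at nodes; since we work inside $R^0_{par}$ (locally free), the H--N subquotients are vector bundles on the smooth locus but may only be torsionfree at nodes, and I expect a direct count to show that the local contributions only enlarge the codimension.

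For (1), I would stratify $R^{0,ss}_{par} \setminus R^{0,s}_{par}$ by parabolic Jordan--H\"older type. Each stratum parametrizes bundles whose parabolic JH successive quotients are stable parabolic bundles $E^i_*$ of the same parabolic slope with fixed numerical invariants, and its dimension is bounded by $\sum_i \dim R^{0,s}_{par}(r_i,\, d_i,\, \overline{\alpha}_i)$ plus parabolic extension dimensions, minus automorphism corrections. Parabolic Riemann--Roch pins down the gap $\dim R^{0,ss}_{par} - \dim S_P$, and the equal-slope condition forces the parabolic Hom contributions to dominate by $(r-1)(g-1)+1$. Items (3) and (4) then follow from (1) and (2) respectively: on $R^{0,s}_{par}$ the determinant morphism to ${\rm Pic}^d(Y)$ is smooth of relative dimension $\dim R^0_{par} - g$, so restricting each stratification to the fibre over the real point $L$ preserves the codimension estimate.

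The main obstacle will be the bookkeeping for parabolic extensions at the nodes. Unlike the smooth case, $\text{Ext}^1_{par}(F_*,\, G_*)$ for torsionfree sheaves on a nodal curve carries local contributions at each node $y$ that depend on the integers $a(E)$ and $b(E)$ from Definition \ref{localtype}; I would extract these via a local-to-global spectral sequence and check that they enlarge, rather than shrink, the codimension bound. A closely related subtlety is that saturated H--N subsheaves inside $R^0_{par}$ need not be locally free at nodes, so one must argue that enlarging the stratification to permit non-free local behaviour of a subsheaf only \emph{decreases} the dimension of the corresponding stratum. Once these two points are settled, the numerical comparison reduces to the smooth-curve parabolic estimate used by Sun, and the uniform lower bound $(r-1)(g-1)+1$ follows for each of the four claimed inequalities.
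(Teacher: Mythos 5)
Your plan follows essentially the same route as the paper: stratify the non-semistable (resp.\ strictly semistable) locus by the numerical data of a destabilizing sub-object --- including, crucially, the local types at the nodes in the sense of Definition \ref{localtype} --- parametrize each stratum by a projective bundle of extensions over a product of Quot-scheme strata together with flag incidence conditions at the points of $I$, pass to frame bundles, and compare dimensions; (3) and (4) are then deduced via the determinant map exactly as you say. The one structural difference is that you stratify by the full Harder--Narasimhan (resp.\ Jordan--H\"older) type and work with iterated extensions, whereas the paper only ever uses a two-step filtration \eqref{E1EE2}: a single subsheaf $E^1$ with $\text{par-}\mu(E^1_*)\ge \text{par-}\mu(E_*)$ (equality in case (1), strict inequality in case (2)) already covers the bad locus, and it keeps the count to a single $\mathrm{Ext}^1(E^2,E^1)$ rather than a sum over pairs. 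Both versions work; the two-step one is simply less bookkeeping. Your observation that the sub- and quotient sheaves need not be locally free at the nodes is on target: the paper handles this by letting $E^1,E^2$ be torsionfree of prescribed local type $\overline{b}_i$ and then restricting to the open locus $P'_t$ where the extension itself is locally free.

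The two points you explicitly defer are, however, exactly where the content lies, so as written this is a plan rather than a proof. For the nodal correction, no local-to-global spectral sequence is needed: the paper quotes the closed formula $\dim \mathrm{Ext}^1_Y(E^2,E^1)=h^1(\mathrm{Hom}(E^2,E^1))+2\sum_j b_{1,j}b_{2,j}$ from \cite[Lemma 2.5(B)]{B5} together with the bound $\dim Q^i_{\overline{b}_i}\le r_i^2(g-1)+1-\sum_j b_{i,j}^2+\dim\mathrm{PGL}(n_i)$, and the node terms assemble in the codimension into $\sum_j\bigl(b_{1,j}^2+b_{2,j}^2-b_{1,j}b_{2,j}\bigr)\ge 0$, confirming (but only after this computation) your expectation that they can only help. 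More importantly, your sketch does not isolate where the ``$+1$'' beyond the trivial estimate $r_1r_2(g-1)\ge(r-1)(g-1)$ comes from. In case (2) it comes for free from the strict slope inequality, since $r_2d_1-r_1d_2\ge 1$ for integers. In case (1) it must come from the flag incidence conditions: the slope equality \eqref{su5.3} combined with \cite[Lemmas 5.1 and 5.2]{Su} forces $\sum_{x\in I}\mathrm{codim}\,S^0_{u(x)}\ge 1$, and this is precisely where the standing hypothesis $I\ne\emptyset$ is used. Without making that step explicit your argument for (1) only yields $(r-1)(g-1)$, which is the classical non-parabolic bound and is not enough for the applications in the paper.
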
 

\begin{proof}
(1):\, Consider $E \,\in\, R^{0, ss}_{par} \setminus R^{0, s}_{par}$. There are torsionfree sheaves 
$E^1,\, E^2$ of ranks $r_1,\, r_2$ and degrees $d_1,\, d_2$ respectively such that 
$E^1,\, E^2,\, E$ fit in an exact sequence
\begin{equation} \label{E1EE2}
0 \,\longrightarrow\, E^1 \,\longrightarrow\, E \,\longrightarrow\, E^2\,\longrightarrow\, 0\, ,
\end{equation}
and for induced parabolic structures (induced from $E_*$) they have 
\begin{equation} \label{equality}
\text{par-}\mu(E^1_*)\, = \, \text{par-}\mu(E_*) \,= \, \text{par-}\mu(E^2_*) \, .
\end{equation}

For $x \,\in\, I$, let $m^x_1,\, \cdots,\, m^x_{l_x}$ be the multiplicities for induced parabolic
structure on $E^1$, we suppress $m^x_i$ if $m^x_i\,=\, 0$. Then the equality in \eqref{equality} can be written as
\begin{equation}\label{su5.3}
r d_1 - r_1 d\,\, =\,\, \sum_{x \in I} \sum_{i=1}^{l_x} (r_1 k_i(x) - r m^x_i) \alpha_i(x) 
\end{equation}

We first construct a countable number of quasiprojective varieties parametrizing extensions of type \eqref{E1EE2}. Let $n_1,\,
n_2$ be integers with $n_1+n_2\,=\, n$. For $i \,=\, 1,\, 2$, let $Q^i$ denote the Quot scheme of quotients 
$\mathcal{O}_Y^{n_i}\,\longrightarrow\, E^i \,\longrightarrow\, 0\, ,$
with ${\rm rank}(E^i)\,=\, r_i$ and ${\rm degree}(E^i)\,=\, d_i$. Let 
$$ \mathcal{O}_{Q^i \times Y} ^{n_i} \,\longrightarrow\, \mathcal{E}^i \,\longrightarrow\, 0$$
be the universal quotient over $Q^i \times Y$.
For $i \,=\, 1, \,2$, let $\overline{b}_i \,=\, ((b_{i, j})_j)$ be a string of integers with $j$ running over nodes. 
For $0 \,\le\, b_{i,j} \,\le \,r_i$, define (finitely many) locally closed subsets 
$$Q^i_{\overline{b}_i} \,\,:=\, \,\{ q_i \,\in\, Q^i\, \ \vert\, \ \overline{b}_i(\mathcal{E}^i_{q_i}) \, =\, \overline{b}_i \} \, ,$$
where $\overline{b}_i(\mathcal{E}^i_{q_i})$ denotes the local type of the sheaf $\mathcal{E}^i_{q_i}$.

Set $Q^{1,2} \,=\, Q^1 \times Q^2$. Over $Q^{1,2} \times Y$ we have pullbacks of $\mathcal{E}^i$,\, $i \,=\, 1,\, 2$,
which are denoted by the same symbol, for simplicity of notation. Let $t\, =\, (h,\, \overline{b}_1,\, \overline{b}_2)$, where
$h$ is a nonnegative integer. 
Define (countably many) locally closed subsets
\begin{equation}\label{dh}
Q^{1,2}_t \,:=\, \{ q\,=\,(q_1,\, q_2) \,\in\, Q^{1,2} \,\big\vert\,\,
h^1({\rm Hom}( \mathcal{E}^2_{q},\, \mathcal{E}^1_{q}))\,=\, h,\, 
q_i \,\in\, Q^i_{\overline{b}_i},\, i \,=\, 1,\, 2 \}\, .
\end{equation} 
Let $\phi\,: \,Q^{1,2} \times Y \,\longrightarrow\, Q^{1,2}$ be the natural projection. Denote by
${\bf Ext}^1_{\phi} (\mathcal{E}^2,\, \mathcal{E}^1)$ the relative ${\bf Ext}$ sheaf on $Q^{1,2}$ for this map (see \cite{La}
for generalities on the relative ${\bf Ext}$ sheaf). By \cite[Lemma 2.5(B)]{B5}, we have 
$$\dim {\rm Ext}^1_Y(\mathcal{E}^2_{q}, \,\mathcal{E}^1_{q})\, =\, h^1({\rm Hom}(\mathcal{E}^2_{q},\, \mathcal{E}^1_{q})) +
2 \sum_j b_{1, j} b_{2, j}.$$ Hence for $q \,\in\, Q^{1,2}_t$, we have $\dim {\rm Ext}^1_Y(\mathcal{E}^2_{q}, \,\mathcal{E}^1_{q})$
to be constant (and $\dim {\rm Ext}^0_Y(\mathcal{E}^2_{q}, \,\mathcal{E}^1_{q})$ is also constant) as $q$ varies.
Therefore, the relative ${\bf Ext}$ sheaf ${\bf Ext}^1_{\phi} (\mathcal{E}^2,\, \mathcal{E}^1)$ is locally free on $Q^{1,2}_t$.

If $h\, =\, 0$ (see \eqref{dh}), define $P_t \,:= \, Q^1_{\overline{b_1}} \times Q^2_{\overline{b_2}}$ and
$\mathcal{E}^t \,=\, \mathcal{E}^1 \oplus \mathcal{E}^2$ on $P_t$.

If $h \, >\,0$, define $P_t \,:=\, \mathbb{P}({\bf Ext}^1_{\phi} (\mathcal{E}^2,\, \mathcal{E}^1)^*)$, a projective bundle
on $Q^{1,2}_t$. On $P_t \times Y$, we have the universal extension 
$$0 \,\longrightarrow\, \mathcal{E}^1 \otimes \mathcal{O}_{P_t}(1) \,\longrightarrow\, \mathcal{E}^t
\,\longrightarrow\, \mathcal{E}^2 \, \longrightarrow 0\, .$$
Let $P'_t \,\subset \,P_t$ be the open subset corresponding to $p_t \,\in\, P_t$ such that $\mathcal{E}^t\vert_{p_t \times Y}$
is locally free. The quasiprojective variety $P'_t$ parametrizes extensions of type \eqref{E1EE2}.

For each $x \,\in\, I$, let $u(x)\,=\, (r_1,\, d_1,\, t,\, m^x_1,\, \cdots,\, m^x_{l_x})$, where
$t \,=\, (h, \,\overline{b}_1, \,\overline{b}_2)$. Define locally closed subschemes
$$S_{u(x)} \,\subset\, {\mathcal F}lag_{\overline{n}(x)} {\mathcal E}^t_x\, ,$$ 
which are fibrations over $P'_t$ whose fibers $S^0_{u(x)}$ consist of flags 
$$E_x\, =\, F_1(E_x) \,\supset\, \cdots \,\supset\, F_{l_x}(E_x) \,\supset\, F_{{l_x}+1}(E_x) \,=\, 0$$
such that
$$\dim \, (F_i(E_x) \cap E_{1,x}) \ = \ r_1 - \sum_{j = 1}^i m^x_j\, .$$
Let
$$S_u \,:=\, \ \times_{\stackrel{P'_t}{x \in I}} \ S_{u(x)}\, .$$
We have 
$$\dim \, S_u \,\le\, \ {\rm dim} \ P_t + \sum_{x \in I} \, \dim \, S^0_{u(x)}\, .$$
Each $S_u$ parametrizes a family of parabolic sheaves $E$ which occur as extensions of type \eqref{E1EE2} (possibly split)
with parabolic structures at $x \,\in\, I$ of type $\overline{n}(x)$ such that the induced structures on $E^1$ are of type
$\overline{m}(x)\, =\, (m^x_1,\, \cdots,\, m^x_{l_x})$ (recall that we suppress $m^x_i$ if $m^x_i \,=\,0$).

Using Riemann-Roch theorem and \cite[Lemma 2.5(B)]{B5}, we have that 
$$\dim {\rm Ext}^1_Y(\mathcal{E}^2_{q}, \,\mathcal{E}^1_{q})
\,=\, r_1 r_2(g-1) + \sum_j b_{1, j} b_{2, j} + h^0({\rm Hom}(\mathcal{E}^2_{q}, \,\mathcal{E}^1_{q}))\, .$$
As in the proof of \cite[Proposition 2.7]{B5}, one has $$\dim Q^i_{\overline{b_i}} \,\le\, r_i^2(g-1) +1 - \sum_j b_{i,j}^2 
+ \ \dim\, {\rm PGL}(n_i).$$ It follows that 
$$
 \begin{array}{ccl}
{\rm dim} \ P_t & \le & (g-1)\sum_{i=1}^2 r_i^2 + 2 - \sum_j b_{1,j}^2 - \sum_j b_{2,j}^2 + \sum_{i=1}^2 \ \dim {\rm PGL}(n_i) \\
{} & {} & + r_1 r_2(g-1) + \sum_j b_{1, j} b_{2, j} + h^0({\rm Hom}(\mathcal{E}^2_{q}, \mathcal{E}^1_{q})).
\end{array}
$$

Let
\begin{equation}\label{sus}
S^{ss}_u \,\subset\, S_u
\end{equation}
be the subset corresponding to the semistable parabolic bundles. Let $\mathcal{E}^u$ denote the pullback
of $\mathcal{E}^t$ to $S^{ss}_u$ (see \eqref{sus}). Denote by $F^u$ the frame bundle of the direct image of $\mathcal{E}^u$ to
$S^{ss}_u$; it is a principal-${\rm GL}(n)$-bundle. There is a morphism 
$$\psi_u\,:\, F^u \,\longrightarrow\, R^{0, ss}_{par} \setminus R^{0, s}_{par}\, .$$
The union of $\psi_u(F^u)$ (as $u$ varies) covers $R^{0, ss}_{par} \setminus R^{0, s}_{par}$. Let $c$ be the infimum of dimensions
of irreducible components of the fibers of $\psi_u$. Since $E \,=\, \mathcal{E}^t_q$ is globally generated by sections, any element of
${\rm Aut}(E)$ acts nontrivially on $H^0(E)$. If $h\,=\,0$ (see \eqref{dh}), then $\dim\, {\rm Aut}(E) \,\ge\, 2 + h^0({\rm Hom}(E^2,
\,E^1))$, and if $h\, >\,0$, then $\dim\,{\rm Aut}(E)\, \ge\, 1 + h^0({\rm Hom}(E^2,\, E^1))$, so that
$$c \,\ge\, h^0({\rm Hom}(E^2,\, E^1)) + n_1^2 + n_2^2\, \ {\rm if}\, \ h\,=\,0\, ,$$
$$c \,\ge\, h^0({\rm Hom}(E^2,\, E^1)) + n_1^2 + n_2^2 - 1\, \ {\rm if} \,\ h \,>\, 0\, .$$
Therefore,
$$
\begin{array}{lll}
\dim \, \psi_u(F^u) & \,=\, & \dim \, S^{ss}_u + n^2 - c \\ 
{} & \,\le\, & n^2 + (g-1)\sum_{i=1}^2 r_i^2 + r_1 r_2(g-1) + \sum_j b_{1, j} b_{2, j} -\sum_j b_{1,j}^2 - \sum_j b_{2,j}^2 \\
{} & {} & + \sum_{x\in I} \ \dim\, S^0_{u(x)} \, .$$ 
\end{array}
$$
Hence the following holds:
 $$
 \begin{array}{lll}
{\rm codim} \ \psi_u(F^u) & \ge & r^2(g-1) +1 + \ {\rm dim \ PGL} (n) + \ {\rm dim} \ {\bf F} -{\rm dim} \ \psi_u(F^u) \\
{} & = & r_1 r_2 (g-1) + \sum_j (b_{i,j} - b_{2,j})^2 + \sum_j b_{1, j} b_{2, j} \\
{} & {} & + \sum_{x \in I} \ {\rm codim} \ S^0_{u(x)}\\
{} & \ge & r_1 r_2 (g-1) + \sum_{x \in I} \ {\rm codim} \ S^0_{u(x)} \\
{} & \ge & (r -1)(g-1) + \sum_{x \in I} \ {\rm codim} \ S^0_{u(x)}\, .
\end{array}
$$
The codimension of $S^0_{u(x)}$ is given by \cite[Lemma 5.1]{Su}. Then equation \eqref{su5.3} and \cite[Lemma 5.2]{Su}
together give that ${\rm codim} \ \psi_u(F^u)\,\ge\, (r-1)(g-1) + 1$. 
Since $\psi_u(F^u)$ (as $u$ varies) cover $R^{0, ss}_{par} \setminus R^{0,s}_{par}\, ,$
it follows that
$${\rm codim} \ (R^{0, ss}_{par} \setminus R^{0,s}_{par}, \,R^{0,ss}_{par}) \ \ge \ (r-1)(g-1) +1\, . 
$$

(2):\, Part (2) can be proved similarly as Part (1) is done. We consider $E \,\in\, R^{0} \setminus R^{0, ss}$.
There are torsionfree sheaves $E^1,\, E^2$ of ranks $r_1,\, r_2$ and degrees $d_1,\, d_2$ respectively such that 
\begin{equation}\label{inequality}
\frac{d_1}{r_1}\, > \,\frac{d_2}{r_2},
\end{equation}
and $E^1, \,E^2$ fit in the extension \eqref{E1EE2}. As in part (1), we construct a countable number of quasiprojective
varieties parametrizing such extensions. We construct the projective bundle $P_t$, the frame bundle $F^u$ and the maps 
$$\psi_u\,:\, F^u \,\longrightarrow\, R^{0} \setminus R^{0,ss}\, ,$$
whose images (as $u$ varies) cover $R^{0} \setminus R^{0,ss}\, .$
 
In this case we have ${\rm deg}({\rm Hom} (\mathcal{E}^2_{q}, \,\mathcal{E}^1_{q}) ) \,=\, r_2 d_1 - r_1 d_2 
 +\sum_j b_{1, j} b_{2, j}$, and hence
$${\rm dim \ Ext}^1_Y(\mathcal{E}^2_{q},\, \mathcal{E}^1_{q}) 
\,=\, r_1 r_2(g-1) + r_1 d_2 - r_2 d_1 +\sum_j b_{1, j} b_{2, j} + h^0({\rm Hom} (\mathcal{E}^2_{q}, \mathcal{E}^1_{q}))\, .$$
Consequently, 
$$
 \begin{array}{ll}
{\rm dim} \ \psi_u({\mathcal F}_u) & \,\le\, n^2 + (g-1)\sum_{i=1}^2 r_i^2 + r_1 r_2(g-1) + r_1 d_2 - r_2 d_1 \\
{} & + \sum_j b_{1, j} b_{2, j} - \sum_j b_{1,j}^2 - \sum_j b_{2,j}^2 + \sum_{x\in I} \ \dim\, S^0_{u(x)}\, .
\end{array}
$$
Then 
$${\rm codim} \ \psi_u(F^u)\,\ge\, (r-1)(g-1) - r_1 d_2 + r_2 d_1 + \sum_{x \in I} \ {\rm codim} \ S^0_{u(x)}\,\ge\, (r-1)(g-1) +1$$
as in part (1), using \eqref{inequality}.
This completes the proof of Part (2).

(3):\, Part (3) follows from Part (1) using the determinant map.

(4):\, Similarly, Part (4) follows from Part (2) using the determinant map.
\end{proof}

As $U^{'}_{Y, par}(r, \,d)$ (respectively, $U^{'}_{Y, par}(r,\, L))$ is a geometric invariant
theoretic quotient of $R^{0, ss}_{par}$ (respectively, $R^{0, ss}_{L, par}$), 
the following corollary is an immediate consequence of Theorem \ref{codimRpars}.

\begin{corollary}\label{ucodim}
Assume that $g\, \geq\, 2$. Then the following statements hold:
$$(1) \ \ {\rm codim} \ (U^{'}_{Y, par}(r,\, d) \setminus U^{'s}_{Y, par}(r,\, d), \,U^{'}_{Y, par}(r,\,
d)) \ \ge \ (r-1)(g-1) +1\, . $$
$$(2) \ \ {\rm codim} \ (U^{'}_{Y, par}(r,\, L) \setminus U^{'s}_{Y, par}(r,\, L),\, U^{'}_{Y, par}(r,\, L)) \ \ge \ (r-1)(g-1) +1\, . $$
\end{corollary}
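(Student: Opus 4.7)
My plan is to extract the corollary from Theorem \ref{codimRpars} through a straightforward GIT dimension count, exploiting the fact that the quotient map preserves (in fact, can only enhance) codimensions when fibers have constant dimension equal to $\dim \text{PGL}(n)$.

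First, I would recall the setup: by construction, the moduli space $U'_{Y,par}(r,d)$ is a good quotient $\pi\colon R^{0,ss}_{par}\to U'_{Y,par}(r,d)$ for the natural $\text{PGL}(n)$-action, and $U'^{s}_{Y,par}(r,d)$ is exactly the image of $R^{0,s}_{par}$. In particular $\pi$ is surjective, and since two semistable points map to the same point of the quotient if and only if their orbit closures meet (S-equivalence), the preimage of the strictly semistable locus $U'_{Y,par}(r,d)\setminus U'^{s}_{Y,par}(r,d)$ is precisely $R^{0,ss}_{par}\setminus R^{0,s}_{par}$. A standard dimension computation (or the description of $U'^{s}_{Y,par}(r,d)$ as a geometric quotient by a group acting with finite stabilizers) yields $\dim U'_{Y,par}(r,d) = \dim R^{0,ss}_{par} - \dim \text{PGL}(n)$.

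Next, I would invoke the fiber dimension theorem: for any surjective morphism of varieties $\pi\colon X\to Y$, every fiber satisfies $\dim \pi^{-1}(y)\geq \dim X-\dim Y$. Applied to our $\pi$, this gives $\dim \pi^{-1}(w)\geq \dim \text{PGL}(n)$ for every $w\in U'_{Y,par}(r,d)$. Writing $W:=U'_{Y,par}(r,d)\setminus U'^{s}_{Y,par}(r,d)$ and $Z:=\pi^{-1}(W)=R^{0,ss}_{par}\setminus R^{0,s}_{par}$, this yields
$$\dim W\;\leq\;\dim Z-\dim\text{PGL}(n).$$
Combining with Theorem \ref{codimRpars}(1), which bounds $\dim Z\leq \dim R^{0,ss}_{par}-(r-1)(g-1)-1$, one gets $\dim W \leq \dim U'_{Y,par}(r,d) - (r-1)(g-1) - 1$, i.e.\ the required codimension estimate for part (1).

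Part (2) is carried out identically: one uses the GIT quotient $\pi_L\colon R^{0,ss}_{L,par}\to U'_{Y,par}(r,L)$, observes $\pi_L^{-1}(U'_{Y,par}(r,L)\setminus U'^{s}_{Y,par}(r,L))=R^{0,ss}_{L,par}\setminus R^{0,s}_{L,par}$, applies the fiber dimension theorem to drop $\dim\text{PGL}(n)$, and invokes Theorem \ref{codimRpars}(3) in place of (1). I do not expect any real obstacle: the only nontrivial input is the lower bound on fiber dimension, which is automatic from surjectivity, and the fact that the complement in the quotient pulls back exactly to the complement in $R^{0,ss}_{par}$ — both are standard GIT facts, which is presumably why the authors call this an immediate consequence.
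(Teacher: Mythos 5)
Your argument is correct and is essentially the one the paper intends: the paper dispatches this corollary in a single sentence as an ``immediate consequence'' of Theorem~\ref{codimRpars} via the GIT quotient, and your write-up (the identification $\pi^{-1}\bigl(U^{'}_{Y,par}\setminus U^{'s}_{Y,par}\bigr)=R^{0,ss}_{par}\setminus R^{0,s}_{par}$ from the uniqueness of the closed orbit in each fiber, combined with the lower bound $\dim\pi^{-1}(w)\ge \dim\mathrm{PGL}(n)$ from the fiber-dimension theorem) supplies exactly the standard details the authors suppress. No gaps.
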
 

\begin{corollary} \label{codimMpargs}
Assume that $g\, \geq\, 2$. Then the following statements hold:
$$ (1) \ \ {\rm codim} \ (M_{par} (r, \,d)\setminus M^{gs}_{par}(r,\, d), \,M_{par}(r,\,d)) \ \ge \ (r-1)(g-1) +1\, . $$ 
$$ (2) \ \ {\rm codim} \ (M_{par}(r,\, L) \setminus M^{gs}_{L, par},\, M_{par})(r,\, L) \ \ge \ (r-1)(g-1) +1\, . $$ 
\end{corollary}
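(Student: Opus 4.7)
The plan is to deduce both parts of Corollary \ref{codimMpargs} from Theorem \ref{codimRpars} by combining the smooth atlas description of the moduli stacks with base change invariance of codimension under $\mathrm{Spec}\,\mathbb{C} \to \mathrm{Spec}\,\mathbb{R}$.

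First I would use the standard presentation of the moduli stack: after twisting by a suitable power of an ample line bundle, every parabolic vector bundle of rank $r$ and degree $d$ is globally generated with vanishing $H^{1}$, so the scheme $R^{0}_{par}$ is a smooth atlas for $M_{par}(r,d)$ with structure group ${\rm GL}(n)$ (the twisting step does not affect the question since tensoring by a line bundle is an isomorphism of stacks and preserves both stability and the geometric stability condition). Under this atlas, $M^{gs}_{par}(r,d)$ pulls back to an open subscheme $R^{0,gs}_{par} \subset R^{0}_{par}$ consisting of those quotients whose base change to $\mathbb{C}$ is stable as a parabolic bundle on $Y_{\mathbb{C}}$. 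Since $R^{0}_{par} \to M_{par}(r,d)$ is smooth and surjective, codimensions of closed substacks equal the codimensions of their preimages, and it suffices to bound
$$\mathrm{codim}(R^{0}_{par} \setminus R^{0,gs}_{par},\, R^{0}_{par}).$$

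Next I would pass to $\mathbb{C}$. By definition of geometric stability, the base change $(R^{0}_{par} \setminus R^{0,gs}_{par})_{\mathbb{C}}$ coincides with $R^{0}_{par,\mathbb{C}} \setminus R^{0,s}_{par,\mathbb{C}}$. Since $\mathrm{Spec}\,\mathbb{C} \to \mathrm{Spec}\,\mathbb{R}$ is faithfully flat of relative dimension zero, codimension is preserved:
$$\mathrm{codim}(R^{0}_{par} \setminus R^{0,gs}_{par},\, R^{0}_{par}) \,=\, \mathrm{codim}(R^{0}_{par,\mathbb{C}} \setminus R^{0,s}_{par,\mathbb{C}},\, R^{0}_{par,\mathbb{C}}).$$
The proof of Theorem \ref{codimRpars} is geometric and does not use that the base field is $\mathbb{R}$; the same arguments apply verbatim over $\mathbb{C}$. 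Combining parts (1) and (2) of that theorem over $\mathbb{C}$ gives that the right-hand side is at least $(r-1)(g-1)+1$, which proves Part (1).

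For Part (2) I would run the same argument with $R^{0}_{L,par}$ replacing $R^{0}_{par}$, using parts (3) and (4) of Theorem \ref{codimRpars} in place of parts (1) and (2); the determinant map is equivariant and fixing the determinant descends compatibly to the stack. The main obstacle to take care of is the bookkeeping of the stack-theoretic codimension comparison and the identification of the preimage of the geometrically stable locus under the atlas map; once these are set up, the corollary is a formal consequence of Theorem \ref{codimRpars} and flatness of base change. There is no genuine new estimate to perform.
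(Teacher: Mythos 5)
Your proposal is correct and follows essentially the same route as the paper, whose entire proof is the observation that $R^{0}_{par}$, $R^{0}_{L,par}$ and their stable loci are smooth atlases for the corresponding stacks, so that Theorem \ref{codimRpars} transfers directly. The one thing you add --- base changing to $\mathbb{C}$ so that the complement of the \emph{geometrically} stable locus is identified with the complement of the stable locus over $Y_{\mathbb{C}}$, where the dimension counts of Theorem \ref{codimRpars} apply verbatim --- is a worthwhile precision that the paper leaves implicit, but it does not change the substance of the argument.
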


\begin{proof}
This follows from Theorem \ref{codimRpars} as $R^0_{par}(r,\, d)$, $R^0_{L, par}(r,\, d)$, $R^{0, s}_{par}(r,\, d)$ and
$R^{0, s}_{L, par}(r,\, d)$
are smooth atlases for $M_{par}(r,\, d)$, $M_{par}(r, \,L)$, $M^{gs}_{par}(r,\, d)$ and $M^{gs}_{par}(r,\, L)$ respectively.
\end{proof}

\section{Picard group of the parabolic moduli space}

Our aim in this section is to compute the Picard group of the stable parabolic moduli space $U^{'s}_{Y, par}(r,\,L)$. We first compute 
the Picard group of the parabolic moduli stack and deduce from it that of the moduli space. Since the geometrically stable parabolic 
vector bundles $E_*$ have only scalar automorphisms, the stack $M^{gs}_{par}(r,\, L)$ is a gerbe with band $\mathbb{G}_m$ over $U^{' 
s}_{Y, par}(r,\, L)$.

Consider the universal bundle over $Y\times M_{par}(r,\, L)$.
Let $L(det)$ denote the corresponding determinant of cohomology line bundle on 
$M_{par}(r,\, L)$.
Let $L_{\mathbb C}\,=\, L\otimes \mathbb C$ be the line bundle on $Y_{\mathbb C}$ obtained by
base change of $L$ from $\mathbb R$ to $\mathbb C$.
Let $\widetilde{L}_{\mathbb C}$ be the line bundle over $M_{par}(r,\, L_{\mathbb C})$ whose 
fiber over a point corresponding to a parabolic vector bundle $E_*$ is ${\rm Hom}(L_{\mathbb C}, 
\, \det E)$ . Trivializing the fiber of $L_{\mathbb C}$ over a non-singular real point $p_0 \,\in\, 
Y_{\mathbb C}$, we can identify $\widetilde{L}_{\mathbb C}$ with the line bundle whose 
fiber over $E_{*, \mathbb{C}}$ is $\det (E_{*, \mathbb{C}})_{p_0}$.

Note that the line bundle $\widetilde{L}_{\mathbb C}$ is real.

\begin{proposition} \label{picMpar}
The Picard group of $M_{par}(r,\, L)$ is generated by
$L(det)$, $\widetilde{L}_{\mathbb C}$ and the generators $\xi^x_j$,\, $x \,\in\, I,\, j\,=\, 2,\, \cdots,\, l_x$ of ${\rm Pic}({\bf F})$.

The restrictions of these line bundles generate ${\rm Pic}(M^{gs}_{par}(r,\, L))$.
\end{proposition}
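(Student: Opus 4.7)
\emph{Proof strategy.} The plan is to proceed in three stages: reduce from $M_{par}(r,L)$ to its geometrically stable open substack via the codimension estimate, determine the Picard group after base change to $\mathbb C$, and then descend to $\mathbb R$ by Galois equivariance.

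\emph{Step 1 (reduction).} By Corollary \ref{codimMpargs}(2), the complement of $M^{gs}_{par}(r,L)$ in $M_{par}(r,L)$ has codimension at least $(r-1)(g-1)+1 \geq 2$. Since both stacks are smooth and irreducible, the open immersion induces an isomorphism on Picard groups; in particular the two assertions of the proposition are equivalent, and I would work on $M_{par}(r,L)$ where the generators are most naturally defined.

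\emph{Step 2 (complex computation).} Base change gives $M_{par}(r,L_{\mathbb C})=M_{par}(r,L)\times_{\mathrm{Spec}\,\mathbb R}\mathrm{Spec}\,\mathbb C$. I would compute $\mathrm{Pic}(M_{par}(r,L_{\mathbb C}))$ by factoring the forgetful morphism
$$
M_{par}(r,L_{\mathbb C}) \longrightarrow M(r,L_{\mathbb C})
$$
through the relative parabolic flag variety, producing an exact sequence
$$
0 \longrightarrow \mathrm{Pic}(M(r,L_{\mathbb C})) \longrightarrow \mathrm{Pic}(M_{par}(r,L_{\mathbb C})) \longrightarrow \mathrm{Pic}(\mathbf F) \longrightarrow 0,
$$
split by a chosen universal parabolic flag at each $x\in I$. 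The left-hand group is freely generated by $L(\det)$ and $\widetilde L_{\mathbb C}$ (the non-parabolic nodal analogue of Laszlo--Sorger / Kumar--Narasimhan, provable through the Quot atlas together with Theorem \ref{codimRpars} specialised to destabilising reductions $r_1+r_2=r$); the right-hand group is generated by the $\xi^x_j$. Combining these yields the three named generators over $\mathbb C$.

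\emph{Step 3 (Galois descent, and main obstacle).} Complex conjugation $\sigma_Y$ induces an involution of $M_{par}(r,L_{\mathbb C})$; each of $L(\det)$, $\widetilde L_{\mathbb C}$, and $\xi^x_j$ is $\sigma$-invariant (the first two by the remark preceding the proposition, and $\xi^x_j$ because $x\in I$ is a real point). The Hochschild--Serre spectral sequence
$$
H^p(\mathrm{Gal}(\mathbb C/\mathbb R),\,H^q_{\mathrm{et}}(M_{par}(r,L_{\mathbb C}),\,\mathbb G_m)) \Longrightarrow H^{p+q}_{\mathrm{et}}(M_{par}(r,L),\,\mathbb G_m)
$$
gives injectivity of restriction by Hilbert 90, and surjectivity onto the $\sigma$-invariants because the obstruction lands in $\mathrm{Br}(\mathbb R)=\mathbb Z/2$ and is killed by pullback to any real point of the stack (for example $L\oplus \mathcal O_Y^{r-1}$ with a real parabolic structure). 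Hence $\mathrm{Pic}(M_{par}(r,L))\cong \mathrm{Pic}(M_{par}(r,L_{\mathbb C}))^{\sigma}$, and the generators of the latter identified in Step 2 descend to those of $\mathrm{Pic}(M_{par}(r,L))$. The principal technical hurdle is Step 2: establishing the non-parabolic Picard computation on the nodal complex curve and verifying the exact flag sequence at the level of stacks, for which the codimension bounds of Theorem \ref{codimRpars} are essential. Steps 1 and 3 are then largely formal consequences.
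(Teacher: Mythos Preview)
Your approach is correct and broadly parallel to the paper's, but the decomposition in Step~2 differs. The paper does not fiber $M_{par}(r,L_{\mathbb C})$ over the non-parabolic stack $M(r,L_{\mathbb C})$; instead it introduces the stack $\mathcal{SL}_{Y_{\mathbb C},par}(r,d)$ of parabolic bundles equipped with a trivialization $\det E \cong L_{\mathbb C}$, observes this is the $\mathbb{G}_m$-torsor over $M_{par}(r,L_{\mathbb C})$ associated to $\widetilde L_{\mathbb C}$, and then invokes \cite[Theorem~6.1]{B6}, which already gives $\mathrm{Pic}(\mathcal{SL}_{Y_{\mathbb C},par}(r,d)) \cong \mathbb{Z}\cdot L(\det) \times \mathrm{Pic}(\mathbf F)$ directly in the nodal parabolic setting. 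This bypasses the need to establish $\mathrm{Pic}(M(r,L_{\mathbb C}))$ separately---precisely the hurdle you flag as the main obstacle---at the cost of quoting an external result. Your flag-bundle sequence is the more transparent route for a self-contained argument, and the splitting via the universal flags is valid.

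Two smaller differences. First, the paper does not invoke the codimension estimate for the second assertion at all: it simply uses that restriction to an open substack of a smooth stack is surjective on Picard groups, which suffices since only generation (not freeness) is claimed for $M^{gs}_{par}(r,L)$. Second, for the descent the paper tersely asserts that $\mathrm{Pic}(M_{par}(r,L))$ is the subgroup of real line bundles in $\mathrm{Pic}(M_{par}(r,L_{\mathbb C}))$ and checks each generator is real; your Hochschild--Serre argument, with the $\mathrm{Br}(\mathbb R)$ obstruction killed at a real point of the stack, is the honest justification underlying that assertion.
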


\begin{proof}
Let $\mathcal{SL}_{Y_{\mathbb C}, par}(r,\, d)$ denote the moduli stack of parabolic vector
bundles $E$ of rank $r$ and degree $d$ on $Y_{\mathbb C}$ together with an isomorphism
$\delta\,:\, \det E \,\stackrel{\cong}{\longrightarrow}\, L_{\mathbb C}$. It is a
${\mathbb G}_m$-torsor over ${M}_{par}(r,\, L_{\mathbb C})$ given by the line bundle
$\widetilde{L}_{\mathbb C}$. Let $f_p\,:\, \mathcal{SL}_{Y_{\mathbb C}, par}(r,\, d)
\,\longrightarrow\, {M}_{par}(r,\,L_{\mathbb C})$ be the canonical (forgetful) map
that forgets the isomorphism $\delta$. The homomorphism
$$f_p^*\ : \ {\rm Pic}({M}_{par}(r,\, L_{\mathbb C})) \,\longrightarrow\, 
{\rm Pic}(\mathcal{SL}_{Y_{\mathbb C}, par}(r,\, d))$$
induced by this canonical map is surjective and its kernel is generated by $\widetilde{L}_{\mathbb C}$.

By \cite[Theorem 6.1]{B6}, the Picard group of the stack $\mathcal{SL}_{Y_{\mathbb C}}(r,\,d)$ is isomorphic to $\mathbb{Z} \times \ {\rm Pic}
({\bf F})$, and it is generated by the pull back of the line bundle $L(det)_{Y_{\mathbb C}}$ on $M(r,\, L_{\mathbb C})$ and the generators of ${\rm 
Pic}({\bf F})$. Thus the Picard group of $M_{par}(r,\,L_{\mathbb C})$ is generated by the line bundles $L(det)_{Y_{\mathbb C}}, 
\widetilde{L}_{\mathbb C}$ and the generators of ${\rm Pic} ({\bf F})$. Note that the generators of ${\rm Pic}({\bf F})$ are all real line 
bundles. Since ${\rm Pic}(M_{par}(r,\, L))$ is the subgroup of real line bundles in ${\rm Pic}(M_{par}(r,\, L_{\mathbb C}))$, it follows that
${\rm Pic}(M_{par}(r,\, L))$ is generated by $L(det),\, \widetilde{L}$ and $(\xi^x_j)_{x, j}$.

Since $M_{par}(r,\, L)$ is a smooth stack (it has a smooth atlas $R^0_{par}$ \cite[subsection 3.1]{B6}), the restriction map 
$$Res: {\rm Pic}(M_{par}(r,\, L))\, \longrightarrow \ {\rm Pic}(M^{gs}_{par}(r,\, L))$$
is surjective. Hence restrictions of these line bundles generate ${\rm Pic}(M^{gs}_{par}(r,\, L))$.
\end{proof}

\begin{theorem}\label{picUYspar}
Assume that either $g \,\ge\, 3$ or $g \,=\,2,\, r \,\ge\, 3$. Let $\chi = d + r (1-g)$. Then 
the Picard group of $U^{'s}_{Y, par}(r,\, L)$ can be identified with the subgroup of ${\rm Pic}(M_{par}(r,\,L))$
consisting of elements of the form 
\begin{equation}\label{c11}
L(det)_Y ^{\otimes a} \bigotimes\widetilde{L}^{\otimes b} \bigotimes \otimes_{x, j} (\xi^x_j) ^{\otimes d^x_j}\, , 
\end{equation}
with $a,\, b,\, d^x_j \,\in\, \mathbb{Z}$ satisfying the only relation 
\begin{equation}\label{c12}
a \chi + b r + \sum_{x, j} d^x_j c^x_j \,=\, 0\, .
\end{equation}
\end{theorem}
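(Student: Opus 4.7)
The plan is to combine the codimension estimates of Section \ref{codimpar} with the $\mathbb{G}_m$-gerbe structure of $M^{gs}_{par}(r,L)$ over $U^{'s}_{Y,par}(r,L)$ and explicitly compute the weights with which the scalar automorphisms act on the generators listed in Proposition \ref{picMpar}.

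\textbf{Step 1.} I would first upgrade Proposition \ref{picMpar} by using Corollary \ref{codimMpargs}. Under the hypothesis $g\ge 3$, or $g=2$ and $r\ge 3$, the complement $M_{par}(r,L)\setminus M^{gs}_{par}(r,L)$ has codimension at least $(r-1)(g-1)+1\ge 3$. Because $M_{par}(r,L)$ is a smooth stack (it admits the smooth atlas $R^0_{L,par}$), purity implies that the restriction map $\mathrm{Pic}(M_{par}(r,L))\longrightarrow\mathrm{Pic}(M^{gs}_{par}(r,L))$ is an isomorphism. In particular $\mathrm{Pic}(M^{gs}_{par}(r,L))$ is freely generated by $L(\det)_Y$, $\widetilde{L}$ and the $\xi^x_j$ with $x\in I$, $2\le j\le l_x$.

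\textbf{Step 2.} Since a geometrically stable parabolic vector bundle $E_*$ has $\mathrm{Aut}(E_*)=\mathbb{G}_m$ acting by scalars, the natural map $M^{gs}_{par}(r,L)\longrightarrow U^{'s}_{Y,par}(r,L)$ is a $\mathbb{G}_m$-gerbe. For such a gerbe the pullback realizes $\mathrm{Pic}(U^{'s}_{Y,par}(r,L))$ as the subgroup of $\mathrm{Pic}(M^{gs}_{par}(r,L))$ consisting of line bundles on which the scalar automorphisms act trivially, i.e.\ of weight zero. So the proof reduces to computing the weight of each generator of $\mathrm{Pic}(M_{par}(r,L))$ with respect to the $\mathbb{G}_m$-action of scalar automorphisms on the fiber over $E_*$.

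\textbf{Step 3.} I would now compute these weights one at a time, and check that a scalar $c\in\mathbb{G}_m=\mathrm{Aut}(E_*)$ acts on the corresponding fiber as follows.
\begin{itemize}
\item On $L(\det)_{E_*}=\det R\Gamma(Y,E)$ the scalar $c$ acts by $c^{\chi(E)}=c^{\chi}$, where $\chi=d+r(1-g)$.
\item On $\widetilde{L}_{E_*}=\det E_{p_0}$ the scalar $c$ acts by $c^{r}$.
\item On the fiber of $\xi^x_j$, which is identified via the flag-type morphism with $\bigwedge^{c^x_j} F_j(E_x)$, the scalar $c$ acts by $c^{c^x_j}$.
\end{itemize}
Consequently the weight homomorphism $\mathrm{Pic}(M_{par}(r,L))\longrightarrow\mathbb{Z}$ sends the line bundle in \eqref{c11} to $a\chi+br+\sum_{x,j}d^x_j c^x_j$. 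The kernel is precisely the subgroup described by the relation \eqref{c12}, which is exactly $\mathrm{Pic}(U^{'s}_{Y,par}(r,L))$.

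\textbf{Main obstacle.} The technically delicate points are (a) justifying that the weight map alone cuts out the image of $\mathrm{Pic}(U^{'s}_{Y,par}(r,L))$, i.e.\ that pullback to the gerbe is injective and has image exactly the weight-zero subgroup (this uses $U^{'s}_{Y,par}(r,L)$ being a scheme/variety, the gerbe being banded by $\mathbb{G}_m$, and descent for line bundles), and (b) the correct bookkeeping of the weights, especially for the flag generators $\xi^x_j$, where one must note that the identification of the fiber over $E_*$ with $\bigwedge^{c^x_j} F_j(E_x)$ is $\mathrm{Aut}(E_*)$-equivariant so that the weight indeed equals $c^x_j$ and not some other flag dimension. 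Everything else is formal once these two items are in place.
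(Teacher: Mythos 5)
Your proposal follows essentially the same route as the paper: use Corollary \ref{codimMpargs} to identify ${\rm Pic}(M^{gs}_{par}(r,L))$ with ${\rm Pic}(M_{par}(r,L))$, then cut out the descending line bundles as the weight-zero subgroup for the $\mathbb{G}_m$-gerbe, with exactly the weights $\chi$, $r$ and $c^x_j$ that the paper computes. The only small point to tighten is in your Step 1: the purity/codimension argument shows the restriction map is an isomorphism, but the \emph{freeness} of ${\rm Pic}(M_{par}(r,L))$ on the generators $L(\det)_Y$, $\widetilde{L}$, $\xi^x_j$ (needed for ``the only relation'' in \eqref{c12}) is a separate input, which the paper supplies by citing the Laszlo--Sorger theorem \cite{LS}.
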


\begin{proof}
Under the assumptions of the theorem, the open subset $M^{gs}_{par}(r, \,L)$ consisting of geometrically stable bundles in the smooth 
stack $M_{par}(r, \,L)$ is of codimension at least two (see Corollary \ref{codimMpargs}). Consequently, the surjective restriction map 
$Res$ is also injective and ${\rm Pic}(M^{gs}_{par}(r, \,L)) \,=\, {\rm Pic}(M_{par}(r,\, L))$. Hence ${\rm Pic}(M^{gs}_{par}(r, \,L))$ 
is generated freely by the line bundles $L(det)_Y,\, \widetilde{L}$ and the generators of ${\rm Pic}({\bf F})$; this follows using 
\cite[pp.~ 499--500, Theorem]{LS}. The line bundles $L(det)_Y$, $\widetilde{L}$ and $\xi^x_j$,\, $x\,\in \,I$,\, $j\,=\,2,\, \cdots,\, 
l_x$ are all real.

The line bundles on $M^{gs}_{par}(r,\, L)$ descend to $U^{'s}_{Y, par}(r,\, L)$ if and only if they have weight $0$. We recall that the 
weight of a line bundle $N$ on the irreducible stack $M^{gs}_{par}(r,\, L)$ is $\ell\, \in\, \mathbb Z$ if for any
$E_*\, \in\, M^{gs}_{par}(r,\, L)$, and any $\lambda\, \in\, {\mathbb G}_m$, the automorphism of $E_*$ given by the multiplication with
$\lambda$ acts on the fiber $N_{E_*}$ as multiplication by $\lambda^\ell$.
Now $L(det)_Y$ has weight $\chi$, while $\widetilde{L}$ has weight $r$ 
and the line bundle $\xi^x_j$ has weight $c^x_j$ for $x\, \in\, I$,\, $j\,=\, 2, \,\cdots ,\, l_x$. Therefore, the theorem follows.
\end{proof}

\begin{remark}\label{picULpar}
Note that ${\rm Pic}(U^{'s}_{Y_{\mathbb C}, par}(r, \, L_{\mathbb C}))$ consists of elements of the form described in Theorem \ref{picUYspar}.
\end{remark}

\section{Codimension of the locus of non locally free sheaves} \label{nonlocallyfree}

In this section, we estimate the codimension of the closed subset in $U_{Y, par}(r,\,d)$ consisting of 
torsionfree sheaves which are not locally free. This is done using parabolic GPBs. 

\subsection{Parabolic GPB}\label{paraGPBs}

For $j \,=\, 1,\, \cdots,\, m$, fix divisors $D_j \,=\, x_j + z_j, x_j \,\neq\, z_j\, ,$ where $(x_j,\, z_j)$ are distinct pairs of
closed points of the normalization $X$ of $Y$. Let $I$ be a set of distinct closed points of $X$ which are distinct from $x_j,\, z_j,\, j \,=\, 1,\, \cdots,\, m$.

\begin{definition}\label{GPB}
A generalized parabolic bundle (GPB in short) $\overline{E}\,=\, (E, F(E))$ of rank $r$ and degree $d$ on $X$ is a vector bundle $E$ of rank $r$ and degree $d$ on $X$ with a GPB structure over the divisors $D_j$, i.e., an $m$-tuple $F(E)\,=\, (F_1(E), \,\cdots,\, F_m(E))$, where
$F_j(E) \,\subset\, E_{x_j} \oplus E_{z_j}$ is a vector subspace of dimension $r$ for $j\,=\, 1,\, \cdots,\, m$. 
\end{definition}

\begin{definition} \label{paragpb}
A parabolic GPB $\overline{E}_*\,=\, (E_*, \,F(E))$ on $X$ of rank $r$ and degree $d$ is a parabolic vector bundle $E_*$ of rank $r$ and
degree $d$ on $X$, with parabolic structure at points of $I$, together with a GPB structure over the divisors $D_j$. \end{definition}

We may define the determinant of a GPB as a GPB of rank $1$. There is a bijective correspondence between line bundles $L$ on $Y$
and GPBs $\overline{L}$ of rank $1$ on $X$ (see \cite[Section 3]{BS} for more details). 
For a real number $0 \,\le\, \alpha \,\le\, 1$, there is the notion of $\alpha$-(semi)stability of a GPB and
of a parabolic GPB. We denote by $P^{par}(r,\, \overline{L})$ the moduli space of $\alpha$-semistable parabolic GPBs $(E_*,\, F(E))$ on $X$
of rank $r$, degree $d$ with $\alpha$ sufficiently close to $1$. It is a normal projective variety of dimension $(r^2 - 1)(g-1) + \, \dim \,
{\bf F}$. There is a surjective morphism
\begin{equation}\label{fp}
f_{par}\,:\, P^{par}(r, \,\overline{L}) \,\longrightarrow\, U_{Y, par}(r,\, L)\, ,
\end{equation}
which is an isomorphism over $U'_{Y, par}(r,\, L)$. 

To a parabolic GPB $(E_*,\, F(E))$, of rank $r$ and degree $d$ on $X$, the map $f_{par}$
in \eqref{fp} associates a parabolic torsion-free sheaf $F_*$ of rank $r$ and degree $d$ on $Y$ given by the extension
$$0 \,\longrightarrow\, F_* \,\longrightarrow\, p_* E_* \,\longrightarrow\, \oplus_j\ p_* \frac{(E_{x_j} \oplus E_{z_j})}{F_j(E)}
\,\longrightarrow\, 0\, .$$ 
Let $p_{x_j}\,:\, F_j(E)\,\longrightarrow\, E_{x_j}$ and $p_{z_j}\,:\, F_j(E)\,\longrightarrow\, E_{z_j}$ be the two
projections. Then the local type $b_j(F)$ of $F$ at the node $y_j$ is given by 
$$b_j(F) \,= \ {\rm dim} \ ker \ p_{x_j} + \ {\rm dim} \ ker \ p_{z_j}\, .$$

For $\overline{s}\,=\,(s_1,\,\cdots,\,s_m)$ and $\overline{t}\,=\,(t_1,\,\cdots,\,t_m)$, where $s_j$ and $t_j$ are integers with
$0 \,\le\, s_j \,\le\, r$ and $0 \,\le\, t_j \,\le\, r$, define $P^{par}_{\overline{s},\overline{t}}(r, \,\overline{L})$ by 
$$P^{par}_{\overline{s},\overline{t}}(r,\, \overline{L}) \ := \ \{ (E,\, F(E)) \,\in\, P^{par}(r,\,\overline{L}) \,\,\big\vert\, \
{\rm dim} \ ker \ p_{x_j}\, =\, s_j, \ {\rm dim} \ ker \ p_{z_j} \,=\, t_j\}\, .$$
For $\overline{b}\,=\,(b_1,\,\cdots,\,b_m)$, where $b_j$ are integers with $0 \,\le\, b_j \,\le\, r$, let 
$$
P^{par}_{\overline{b}}(r,\,\overline{L}) \,=\, \bigcup_{\overline{s}+\overline{t}=\overline{b}}
\ P^{par}_{\overline{s},\overline{t}}(r,\,\overline{L})\, .
$$ 
The moduli space $P(r, \,\overline{L})$ is the union of $P_{\overline{b}}(r,\,\overline{L})$. The image
$f_{par}(P^{par}_{\overline{b}}(r,\,\overline{L}))$ consists of semistable torsion-free sheaves which are of local type $b_j$ at $y_j$ for
all $j$, and $U_{Y, par}(r, \,L)$ is the union of the subsets $f_{par}(P^{par}_{\overline{b}}(r,\,\overline{L})$. In particular, for
$\overline{0}\,=\,(0,\,\cdots,\, 0)$, this $P^{par}_{\overline{0}}(r,\,\overline{L})$ maps isomorphically onto $U'_{Y, par}(r,\, L)$. 

Since the determinant $L$ is fixed and is locally free, for $(E, \,F(E)) \,\in\, P^{par}(r,\, \overline{L})$, either $p_{x_j}$ and $p_{z_j}$
are both isomorphisms or neither of them is an isomorphism (Case (ii) in the proof of \cite[Proposition 3.3]{B02}). Hence one
does not have $s_j \,=\, 0$,\, $t_j \,\neq\, 0$ or $s_j \,\neq\, 0$,\, $t_j\,=\,0$, so that $b_j \,\ge\, 2$ if nonzero.

\begin{theorem}\label{codimnonfree}
Let $Y$ be an irreducible nodal curve of arithmetic genus $g \ge 2$ with at least one node. Then 
$${\rm codim} \ (U_{Y, par}(r,\, L) \setminus U'_{Y, par}(r,\, L),\, U_{Y, par}(r,\, L)) \ \ge\, 3$$ for $r\,\ge\, 2$.
\end{theorem}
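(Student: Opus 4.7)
The plan is to analyze the non-locally-free locus via the surjective morphism $f_{par}\colon P^{par}(r, \overline{L}) \to U_{Y, par}(r, L)$ introduced above. Since $f_{par}$ restricts to an isomorphism $P^{par}_{\overline{0}}(r, \overline{L}) \stackrel{\sim}{\to} U'_{Y, par}(r, L)$, both moduli have the same dimension, and the non-locally-free locus is the union, over $(\overline{s}, \overline{t}) \ne (\overline{0}, \overline{0})$, of the images $f_{par}(P^{par}_{\overline{s}, \overline{t}}(r, \overline{L}))$. The codimension of each such image in $U_{Y, par}(r, L)$ equals the codimension of the stratum in $P^{par}(r, \overline{L})$ plus the generic fibre dimension of $f_{par}$ restricted to the stratum, and the goal is to show that this sum is at least $3$.

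For the stratum codimension, I would analyze the position of $F_j(E) \subset E_{x_j} \oplus E_{z_j}$ via its intersections with the two summands. The locally closed subset of $\mathrm{Gr}(r, 2r)$ where $\dim(F_j(E) \cap (0 \oplus E_{z_j})) = s_j$ and $\dim(F_j(E) \cap (E_{x_j} \oplus 0)) = t_j$ has dimension $r^2 - s_j^2 - t_j^2$ (parametrize by the two kernel subspaces plus a graph-like complement in the quotient). Combined with the determinant compatibility, which contributes the same number of equations as in the generic stratum (one equation per node), this yields codimension $\sum_j (s_j^2 + t_j^2)$ for $P^{par}_{\overline{s}, \overline{t}}(r, \overline{L})$ in $P^{par}(r, \overline{L})$.

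For the fibre dimension of $f_{par}$, I would analyze how a GPB reconstructs its associated torsionfree sheaf via the exact sequence
$$0 \longrightarrow F \longrightarrow p_*E \longrightarrow \bigoplus_j (E_{x_j} \oplus E_{z_j})/F_j(E) \longrightarrow 0.$$
Given $F$ with local type $\overline{b}$, the freedom at each node $y_j$ in choosing $F_j(E)$ compatible with the local torsion structure of $F$ contributes dimension $s_j t_j$, so the generic fibre has dimension $\sum_j s_j t_j$.

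Combining these estimates, the codimension of $f_{par}(P^{par}_{\overline{s}, \overline{t}}(r, \overline{L}))$ in $U_{Y, par}(r, L)$ is $\sum_j (s_j^2 + t_j^2 + s_j t_j)$. The observation noted just before the theorem, that $b_j \ge 2$ whenever $b_j \ne 0$ because $\overline{L}$ is locally free, forces $s_j \ge 1$ and $t_j \ge 1$ at every non-trivial node, and the elementary inequality $s_j^2 + t_j^2 + s_j t_j \ge 3$ then holds at any such node; since $(\overline{s}, \overline{t}) \ne (\overline{0}, \overline{0})$ guarantees at least one $j_0$ with $s_{j_0}, t_{j_0} \ge 1$, the required lower bound follows. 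The main obstacle is the fibre dimension computation, which relies on a careful local analysis of the GPB-to-torsionfree correspondence at each node and its interaction with fixing $\det F = L$.
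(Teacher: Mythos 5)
Your strategy coincides with the paper's: both pass through the parabolic GPB moduli space, stratify by the kernel dimensions $(\overline{s},\overline{t})$ of the two projections $p_{x_j},p_{z_j}$, and bound the codimension of the image of each stratum under $f_{par}$ by the codimension of the stratum plus the generic fibre dimension; your final quantity $s_j^2+s_jt_j+t_j^2\ge \tfrac{3}{4}b_j^2\ge 3$ is essentially the bound the paper extracts from \cite[Theorem 3.11]{BS}. The difference is that the paper imports the two dimension counts from \cite{BS} and \cite{B02}, whereas you assert them directly --- and those two assertions are exactly where the substance of the proof lies.

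Two of your steps are genuine gaps as written. First, the claim that the fixed-determinant condition ``contributes the same number of equations as in the generic stratum'' is not automatic: the determinant of a GPB is induced by $\wedge^r F_j\to \wedge^r E_{x_j}\oplus\wedge^r E_{z_j}$, and on a stratum with $s_j,t_j>0$ both components of this map vanish on the line $\wedge^r F_j$, so the naive ``one linear condition per node'' degenerates precisely on the strata you care about. This is the content of \cite[Proposition 3.3]{B02} (which the paper invokes only to deduce $s_j=0\Leftrightarrow t_j=0$); if the condition were in fact lost at a degenerate node, your count would drop to $s_j^2+s_jt_j+t_j^2-1$, which equals $2$ when $s_j=t_j=1$ and no longer gives the theorem. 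Second, the fibre dimension $\sum_j s_jt_j$ --- which you yourself flag as the main obstacle --- requires the local classification of subspaces $F_j(E)\subset E_{x_j}\oplus E_{z_j}$ producing a prescribed stalk $\mathcal{O}_{y_j}^{\oplus(r-b_j)}\oplus m_{y_j}^{\oplus b_j}$, compatibly with the fixed determinant; this is the computation the paper takes from \cite{BS}, and without it the exponent in your final inequality is unverified. Finally, your argument tacitly assumes that $P^{par}(r,\overline{L})$ is irreducible of the same dimension as $U_{Y,par}(r,\,L)$, with stratum codimensions computed fibrewise over a positive-dimensional moduli of underlying bundles on $X$; when $X$ is rational there is no such moduli (only finitely many underlying bundles $E$, each with a large automorphism group), and the paper runs a separate dimension count for $g(X)=0$ which your proposal omits.
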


\begin{proof}
A major part of the proof is along similar lines as that of \cite[Theorem 1.3]{BS}. To avoid repetition, we only give the modifications 
needed. We first note that the parabolic moduli space $U_{X, par}(r,\, p^*L)$ has dimension $(r^2-1)(g(X)-1) + \ {\rm dim} \ {\bf F}$ for $g(X) 
\,\ge\, 1$. Hence we only have to consider two cases viz., $g(X) \,\ge\, 1$ and $g(X) \,=\, 0$.

In case $Y$ is not rational (i.e., $g(X) \,\ge\, 1$), the proof is exactly the same as that of \cite[Theorem 3.11]{BS} with
\begin{itemize}
\item $M_X(r, \,\pi^*L_0)$ replaced with $U'_{X, par}(r, \,p^*L)$,

\item $\overline{M}(r,\, L_0)$ replaced with $U_{Y, par}(r,\, L)$,

\item $M(r,\, L_0)$ replaced with $U'_{Y, par}(r,\, L)$, and

\item the subsets $P_{s,t}(r, \, \overline{L}_0)$ and $P_{\overline{b}}(r, \,\overline{L}_0)$ of the moduli space of GPBs replaced with
the subsets $P^{par}_{s,t}(r,\, \overline{L})$ and $P^{par}_{\overline{b}}(r, \,\overline{L})$ respectively of the moduli space of parabolic GPBs.
\end{itemize}
Since the set $I$ consists of nonsingular points of $Y$, the parabolic structure on the parabolic GPB $(E_*, \,F(E))$, and the parabolic
structure on $F_*$ on $Y$ given by the GPB, are determined by each other. Hence the dimension $d_f$ of the fiber over $F_*$ is given by
the same formula as in the non-parabolic case. Then calculations similar to those in the non-parabolic case give that
$${\rm codim} \ f_{par}(P^{par}_{\overline{b}}(r,\, \overline{L}))\ \ge \ \frac{3 r b_j}{4}\, .$$
Since $b_j \,\ge\, 2$, it follows that
$${\rm codim} \ (U_{Y, par}(r,\, L) \setminus U'_{Y, par}(r,\, L), U_{Y, par}(r, \,L)) \ \ge\, 3 \ {\rm for}\, \ r \,\ge\, 2,\
g(X) \,\ge\, 1\, .$$ 

Now we come to the case where $Y$ is a rational curve (i.e., $g(X) \,=\, 0$). If $(E_*,\, F(E))$ is a semistable parabolic GPB on
$\mathbb{P}^1$, then the parabolic semistability implies that there are only finitely many choices of the underlying bundle $E$. Therefore,
there are at most finitely many irreducible components of $P^{par}(r,\, \overline{L})$ each of dimension 
$$m(r^2 - 1) + {\rm dim} \ 
{\bf F} - \ {\rm P} ({\rm Aut}(E)) \,\le\, {\rm dim} \ {\bf F} + (m-1) (r^2 - 1)$$ 
(by the proof of \cite[Proposition 3.10]{BS}). Then one has 
$${\rm dim} \ P^{par}_{\overline{s},\overline{t}}(r,\, \overline{L})\ \le \ {\rm dim} \ {\bf F} +(r^2 -1)(m-1) + r^2 - s^2 -1\, ,$$
so that 
$$
{\rm dim} \ f_{par}(P^{par}_{\overline{b}}(r, \,\overline{L})) \ \le \ {\rm dim} \ {\bf F} +(r^2 -1)(m - 1) - 
\frac{3r b_j}{4}\, .$$
Since ${\rm dim} \ U_{Y, par}(r, \,L) \ = \ {\rm dim} \ {\bf F} + (r^2 -1) (g-1)$, and $g \,=\, m$, this implies that 
$${\rm codim} \ f_{par}(P^{par}_{\overline{b}}(r,\, \overline{L})) \ \ge \ \frac{3 r b_j}{4} \ \ge\ 3\, \ {\rm for} \ r \,\ge\, 2\, .$$ 
Thus, ${\rm codim} \ (U_{Y, par}(r,\, L) \setminus U'_{Y, par}(r,\, L),\, U_{Y, par}(r,\, L)) \ \ge\ 3$ for $r \,\ge\, 2$ and $g(X)\, =\, 0$. 
\end{proof} 

\begin{corollary}\label{functions}
Let $g \,\ge\, 2$ and $r \,\ge\, 2$. Then the following two hold:
\begin{enumerate}
\item $H^0(U^{' }_{Y_{\mathbb C}, par}(r, \,L_{\mathbb C}), \,\mathcal{O})\,= \,\mathbb{C}$.
\item $H^0(U^{' s}_{Y_{\mathbb C}, par}(r, \,L_{\mathbb C}), \,\mathcal{O})\,= \,\mathbb{C}$.
\end{enumerate}
\end{corollary}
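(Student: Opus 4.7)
The plan is to reduce both parts to the trivial fact that an irreducible normal projective variety over $\mathbb{C}$ has only constants as global regular functions, by combining the codimension bounds already proved with the algebraic Hartogs lemma (restriction along an open immersion with complement of codimension at least two induces an isomorphism on global sections of $\mathcal{O}$, provided the ambient variety is normal).

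For part (1), I first observe that $U_{Y_\mathbb{C}, par}(r,\, L_\mathbb{C})$ is irreducible and projective, since it is by definition the closure of the irreducible variety $U'_{Y_\mathbb{C}, par}(r,\, L_\mathbb{C})$ inside the projective variety $U_{Y_\mathbb{C}, par}(r,\, d)$. Let
$$\nu\,:\, \widetilde{U}\,\longrightarrow\, U_{Y_\mathbb{C}, par}(r,\, L_\mathbb{C})$$
denote its normalization. Then $\widetilde{U}$ is an irreducible normal projective variety, so $H^0(\widetilde{U},\, \mathcal{O})\,=\, \mathbb{C}$. Since $U'_{Y_\mathbb{C}, par}(r,\, L_\mathbb{C})$ is itself normal (as recorded in Section \ref{realcurve}), $\nu$ restricts to an isomorphism over this open subvariety, so I may identify $U'_{Y_\mathbb{C}, par}(r,\, L_\mathbb{C})$ with its preimage in $\widetilde{U}$. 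By Theorem \ref{codimnonfree}, applied over $\mathbb{C}$, the complement of $U'_{Y_\mathbb{C}, par}(r,\, L_\mathbb{C})$ in $U_{Y_\mathbb{C}, par}(r,\, L_\mathbb{C})$ has codimension at least $3$, and because $\nu$ is finite the same bound holds in $\widetilde{U}$. Algebraic Hartogs then gives
$$H^0(U'_{Y_\mathbb{C}, par}(r,\, L_\mathbb{C}),\, \mathcal{O})\,=\, H^0(\widetilde{U},\, \mathcal{O})\,=\, \mathbb{C}\, .$$
(In the smooth case, where $Y$ has no nodes, the step is vacuous because $U'_{Y_\mathbb{C}, par}(r,\, L_\mathbb{C})$ is itself projective and normal.)

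For part (2), the open subset $U^{'s}_{Y_\mathbb{C}, par}(r,\, L_\mathbb{C})$ sits inside the normal quasi-projective variety $U'_{Y_\mathbb{C}, par}(r,\, L_\mathbb{C})$ with complement of codimension at least $(r-1)(g-1)+1 \,\ge\, 2$ by Corollary \ref{ucodim}(2), applied over $\mathbb{C}$ (the bound uses $r \,\ge\, 2$ and $g \,\ge\, 2$). A second application of algebraic Hartogs yields
$$H^0(U^{'s}_{Y_\mathbb{C}, par}(r,\, L_\mathbb{C}),\, \mathcal{O})\,=\, H^0(U'_{Y_\mathbb{C}, par}(r,\, L_\mathbb{C}),\, \mathcal{O})\,=\, \mathbb{C}\, .$$

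The only genuine point to check, and thus the main (minor) obstacle, is that the earlier codimension statements of Theorem \ref{codimnonfree} and Corollary \ref{ucodim}, phrased over $\mathbb{R}$, remain valid after base change to $\mathbb{C}$. This is routine: both proofs are dimension counts on quot schemes, relative flag schemes, and the parabolic GPB moduli, all of which behave well under the base change $\mathbb{R}\,\hookrightarrow\,\mathbb{C}$; indeed the real bounds are typically derived by descent from the complex ones. Everything else is a standard application of normalization and the algebraic Hartogs lemma.
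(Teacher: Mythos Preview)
Your argument is correct and matches the paper's own proof essentially line for line: both take the normalization of the projective closure $U_{Y_{\mathbb C}, par}(r,\, L_{\mathbb C})$, use Theorem \ref{codimnonfree} together with finiteness of normalization to transfer the codimension bound, and then apply Hartogs; for part (2) both pass from $U^{'s}$ to $U'$ via Corollary \ref{ucodim} and then invoke part (1). Your extra remark on the validity of the codimension estimates after base change to $\mathbb{C}$ is a reasonable clarification but not a deviation in method.
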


\begin{proof}
(1):\, Let $h\,:\, N \,\longrightarrow\, U_{Y_{\mathbb C}, par}(r, \,L_{\mathbb C})$ be the normalization. Since
$U^{' }_{Y_{\mathbb C}, par}(r, \,L_{\mathbb C})$ is normal, it follows that $h$ is an isomorphism over $U^{' }_{Y_{\mathbb C}, par}(r,
\,L_{\mathbb C})$. As the normalization map is finite, we have 
$${\rm codim}\,(N \setminus h^{-1}(U'_{Y_{\mathbb C}, par}(r, \,L_{\mathbb C})), N)\,= \, {\rm codim}\, ( U_{Y_{\mathbb C}, par}(r, 
\,L_{\mathbb C}) \setminus U'_{Y_{\mathbb C}, par}(r, \,L_{\mathbb C}), U_{Y_{\mathbb C}, par}(r, \,L_{\mathbb C})) \,\ge \,3$$ by Theorem 
\ref{codimnonfree}. The variety $N$ being normal, this implies that the functions on $h^{-1}(U'_{Y_{\mathbb C}, par}(r, \,L_{\mathbb C}))$ extend 
uniquely to functions on $N$ and hence are constant. It now follows that the
functions on $U'_{Y_{\mathbb C}, par}(r, \,L_{\mathbb C})$ are constant.

(2):\, This can be proved similarly as Part (1) using Corollary \ref{ucodim} and Theorem \ref{codimnonfree}.
\end{proof}

\section{Brauer group of the parabolic moduli space} 

Since $U^{' s}_{Y, par}(r,\,L)$ is a smooth quasi projective variety over a field, the Brauer group ${\rm Br}(U^{' s}_{Y, par}(r, \,L))$ 
of $U^{' s}_{Y, par}(r, \,L)$ is the cohomological Brauer group $H^2_{{\rm et}}(U^{' s}_{Y, par}(r,\, L), \,\mathbb{G}_m)$; the latter is a 
torsion group.

\subsection{The Brauer class $\beta$}

\begin{definition}\label{brauerclass}
Let $\beta \,\in\, {\rm Br}(U^{' s}_{Y, par}(r, \,L))$ be the Brauer class given by the gerbe $$M^{gs}_{par}(r,\, L)\, \longrightarrow\, 
U^{' s}_{Y, par}(r, \,L)$$ with band $\mathbb{G}_m$.

Let $\mathbb{Z} \beta \,\subseteq\, {\rm Br}(U^{' s}_{Y, par}(r,\, L))$ denote the subgroup generated by $\beta$.
\end{definition}

Choosing a nonsingular closed point $p_0 \,\in\, Y$, the Brauer class $\beta$ can be described as the class of the projective
bundle ${\mathbb P}_{p_0}$ over $U^{' s}_{Y, par}(r, \,L)$ whose fiber is $\mathbb{P}(E_{p_0})$ over
any $E_* \,\in \,U^{' s}_{Y, par}(r,\, L)$, in other words, $\beta$ is given by the Azumaya algebra with fibers ${\rm End}(E_{p_0})$. 

By \cite[Theorem 1.2]{B6}, we have the following result over $Y_{\mathbb C}$.

\begin{theorem}\label{brgpC} 
Assume that $g \,\ge \,2$, and if $g \,=\, 2\,=\, r$ then assume that $d$ is odd. Then
$$Br(U^{' s}_{Y_{\mathbb C}, par}(r,\, L_{\mathbb C})) \,=\, \mathbb{Z} / m \mathbb{Z} \, ,$$
where 
\begin{equation}\label{m}
m \,=\, {\rm g.c.d.} (r,\, d,\, \{k_1(x),\, k_2(x),\, \cdots,\, k_{l_x}(x)\}_{x\in I} )
\end{equation}
(see \eqref{rk}). Moreover, the Brauer group ${\rm Br}(U^{' s}_{Y_{\mathbb C}, par} (r,\, L_{\mathbb C}))$ is generated by the class
$\beta$ in Definition \ref{brauerclass}.
\end{theorem}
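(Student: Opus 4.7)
The plan is to derive this theorem from the $\mathbb{G}_m$-gerbe structure $\pi\,:\, M^{gs}_{par}(r,\, L_{\mathbb C}) \,\longrightarrow\, U^{'s}_{Y_{\mathbb C}, par}(r,\, L_{\mathbb C})$ together with the Picard group description of Section 4 (applied over $\mathbb C$ via Remark \ref{picULpar}). Throughout I abbreviate $U^{'s} \,:=\, U^{'s}_{Y_{\mathbb C}, par}(r,\, L_{\mathbb C})$ and $M^{gs}_{par} \,:=\, M^{gs}_{par}(r,\, L_{\mathbb C})$. The Leray spectral sequence for $\pi$ with coefficients in $\mathbb{G}_m$, using the standard identifications $\pi_* \mathbb{G}_m \,=\, \mathbb{G}_m$ and $R^1 \pi_* \mathbb{G}_m \,=\, \underline{\mathbb{Z}}$, yields the low-degree exact sequence
\begin{equation*}
0 \,\longrightarrow\, {\rm Pic}(U^{'s}) \,\longrightarrow\, {\rm Pic}(M^{gs}_{par}) \,\stackrel{w}{\longrightarrow}\, \mathbb{Z} \,\stackrel{\delta}{\longrightarrow}\, {\rm Br}(U^{'s}) \,\longrightarrow\, {\rm Br}(M^{gs}_{par})\, ,
\end{equation*}
where $w$ assigns to a line bundle on the stack its $\mathbb{G}_m$-weight and $\delta(1)\,=\,\beta$.

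Next, I would identify the image of $w$. By Remark \ref{picULpar} together with the weight computation in the proof of Theorem \ref{picUYspar}, the generators $L(\det)_Y$, $\widetilde{L}_{\mathbb C}$ and $\xi^x_j$ of ${\rm Pic}(M^{gs}_{par})$ carry weights $\chi$, $r$ and $c^x_j$ respectively. Since $c^x_1 \,=\, r$ and $c^x_i - c^x_{i+1} \,=\, k_i(x)$, the subgroup of $\mathbb{Z}$ generated by $\{\chi,\, r,\, c^x_j\}$ coincides with the subgroup generated by $r$, $d$ and $\{k_i(x)\}$, which is precisely $m\mathbb{Z}$ with $m$ as in \eqref{m}. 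Hence ${\rm coker}(w) \,\cong\, \mathbb{Z}/m\mathbb{Z}$, and $\delta$ realizes this cokernel as the subgroup $\mathbb{Z}\beta \,\subseteq\, {\rm Br}(U^{'s})$, showing that the order of $\beta$ is exactly $m$.

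The remaining task, which I expect to be the main obstacle, is to show that the image of $\delta$ already exhausts ${\rm Br}(U^{'s})$, or equivalently that ${\rm Br}(M^{gs}_{par}) \,=\, 0$. My plan here is as follows. By Corollary \ref{codimMpargs}, under the hypothesis $g \,\ge\, 3$ or $(g \,=\, 2$ and $r \,\ge\, 3)$, the complement $M_{par}(r,\, L_{\mathbb C}) \setminus M^{gs}_{par}$ has codimension at least two in the smooth stack $M_{par}(r,\, L_{\mathbb C})$, so cohomological purity for the Brauer group on smooth Deligne--Mumford stacks reduces the question to showing ${\rm Br}(M_{par}(r,\, L_{\mathbb C})) \,=\, 0$. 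Presenting $M_{par}$ as a $\mathbf F$-fibration over the non-parabolic moduli stack (noting that $\mathbf F$ is rational with trivial Brauer group), the problem reduces further to the vanishing of the Brauer group of the non-parabolic stack on the nodal curve $Y_{\mathbb C}$, which can be accessed by pulling back through the generalized parabolic bundle description recalled in Section \ref{paraGPBs} to the smooth normalization $X_{\mathbb C}$, where vanishing is classical. The exceptional case $g \,=\, 2 \,=\, r$ with $d$ odd falls outside the codimension hypothesis, but there stability coincides with semistability, the moduli space is already smooth, and the reduction to the normalization is direct.
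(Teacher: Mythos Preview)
The paper does not supply its own proof of this theorem: it is quoted verbatim from \cite[Theorem 1.2]{B6}, so there is no argument here to compare against. Your proposal therefore attempts considerably more than the present paper does.

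Your first two steps are correct and are exactly the mechanism used in \cite{B6}: the low-degree exact sequence for the $\mathbb{G}_m$-gerbe $\pi$ identifies $\mathbb{Z}\beta$ with $\mathrm{coker}(w)$, and the weight computation $w(L(\det))=\chi$, $w(\widetilde{L}_{\mathbb C})=r$, $w(\xi^x_j)=c^x_j$ together with the relations $c^x_1=r$, $c^x_i-c^x_{i+1}=k_i(x)$, $\chi\equiv d \pmod r$ indeed gives $\mathrm{im}(w)=m\mathbb{Z}$.

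The gap is in your final paragraph. First, $M_{par}(r,L_{\mathbb C})$ is an Artin stack (the inertia contains $\mathbb{G}_m$), not a Deligne--Mumford stack, so the purity statement you invoke is not available in that form; in \cite{B6} one works instead on the smooth atlas $R^0_{L,par}$, where purity for schemes applies. Second, your reduction from the nodal curve $Y_{\mathbb C}$ to the smooth normalization $X_{\mathbb C}$ via GPBs is not a mere pullback: the GPB moduli carries additional Grassmannian data over the nodes, and one has to show that this extra structure, like the flag variety $\mathbf F$, contributes nothing to the Brauer group. That comparison, together with the vanishing over the smooth curve, is precisely the substantive content of \cite{B6}; your sketch gestures at it but does not carry it out. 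Finally, note that the case $g=2=r$ with $d$ odd is not handled by the remark ``stability coincides with semistability'': one still needs the Brauer vanishing for the stack, and the codimension estimate of Corollary~\ref{codimMpargs} gives only codimension $\ge 1$ there, so a separate argument (again supplied in \cite{B6}) is required.
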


\begin{proposition} \label{exponent}
Let $L$ be a real point of the Picard variety ${\rm Pic}^d(Y)$. Then
$$\beta \,\in\, {\rm Br}(U^{' s}_{Y, par}(r,\, L))$$ has exponent $m$.
\end{proposition}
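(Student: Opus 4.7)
My plan is to sandwich the exponent of $\beta$ between $m$ below and $m$ above: the lower bound will follow from base change to $\mathbb{C}$ combined with Theorem~\ref{brgpC}, and the upper bound from explicitly trivialising $m\beta$ via a real line bundle of weight $m$ on the moduli stack, whose existence is guaranteed by Proposition~\ref{picMpar}.

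For the lower bound, I would first observe that the Azumaya algebra $\mathrm{End}(E_{p_0})$ representing $\beta$ is defined over $\mathbb{R}$, so the natural map $\pi^{*}\,:\,\mathrm{Br}(U^{'s}_{Y,par}(r,L)) \longrightarrow \mathrm{Br}(U^{'s}_{Y_{\mathbb{C}},par}(r,L_{\mathbb{C}}))$ sends $\beta$ to $\beta_{\mathbb{C}}$. By Theorem~\ref{brgpC}, $\beta_{\mathbb{C}}$ has order exactly $m$, so any relation $k\beta = 0$ over $\mathbb{R}$ forces $k\beta_{\mathbb{C}} = 0$ over $\mathbb{C}$ and hence $m \mid k$. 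This gives that the exponent of $\beta$ is at least $m$.

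For the upper bound, I would work with the $\mathbb{G}_m$-gerbe $M^{gs}_{par}(r,L) \longrightarrow U^{'s}_{Y,par}(r,L)$ whose class is $\beta$. The general principle is that a line bundle of weight $k$ on the stack is equivalent to a $\beta^{-k}$-twisted line bundle on the base, and therefore trivialises $k\beta$. Concretely, if $N$ is a real line bundle on $M^{gs}_{par}(r,L)$ of weight $m$, then $N^{\vee} \otimes E_{p_0}^{\otimes m}$ has total weight $0$ and descends to a real vector bundle $V$ on $U^{'s}_{Y,par}(r,L)$; since $\mathrm{End}(V) \cong \mathrm{End}(E_{p_0}^{\otimes m})$ represents $m\beta$, this yields $m\beta = 0$. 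The task therefore reduces to producing a real $N$ of weight $m$. Proposition~\ref{picMpar} tells me that the restrictions of the real line bundles $L(\det)$, $\widetilde{L}$, $\xi^x_j$ generate $\mathrm{Pic}(M^{gs}_{par}(r,L))$, and from the proof of Theorem~\ref{picUYspar} these have weights $\chi$, $r$, $c^x_j$ respectively. A short arithmetic verification shows $\gcd(\chi,\,r,\,\{c^x_j\}_{x,j}) = m$: the congruence $\chi \equiv d \pmod r$ takes care of the first two generators, while the telescoping $c^x_i - c^x_{i+1} = k_i(x)$, $c^x_{l_x} = k_{l_x}(x)$, $c^x_1 = r$ shows that $\{r\}\cup\{c^x_j\}_{j\ge 2}$ generates the same subgroup of $\mathbb{Z}$ as $\{k_i(x)\}_i$. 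Bezout then supplies integers $a,\,b,\,d^x_j$ with $a\chi + br + \sum_{x,j} d^x_j c^x_j = m$, and the tensor product $N := L(\det)^{\otimes a} \otimes \widetilde{L}^{\otimes b} \otimes \bigotimes_{x,j} (\xi^x_j)^{\otimes d^x_j}$ is a real line bundle of weight $m$.

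The main obstacle I anticipate is not the arithmetic but the descent step: one must confirm that the weight-$0$ bundle $N^{\vee} \otimes E_{p_0}^{\otimes m}$ descends to a \emph{real} vector bundle on the real moduli space, rather than only to a vector bundle after base change to $\mathbb{C}$. This is a compatibility issue between the gerbe structure and the real structure, and it holds because the universal parabolic bundle on $Y \times M_{par}(r,L)$, its fibre $E_{p_0}$, and the line bundle $N$ are all defined over $\mathbb{R}$, with the scalar $\mathbb{G}_m$-action on the universal bundle commuting with the real structure. Once this compatibility is recorded, the standard descent argument for $\mathbb{G}_m$-gerbes produces $V$ over $\mathbb{R}$ and completes the proof.
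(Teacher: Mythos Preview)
Your argument is correct and close in spirit to the paper's; the paper simply packages both bounds into a citation of \cite[Lemma~3.10(v), Lemma~3.9]{H} together with Proposition~\ref{picMpar}. Those lemmas encode precisely the principle you invoke for the upper bound---that a line bundle of weight $k$ on a $\mathbb{G}_m$-gerbe trivialises $k\beta$---\emph{and} its converse, namely that if $k\beta=0$ then the gerbe carries a line bundle of weight $k$. Taken together, the exponent of $\beta$ equals the positive generator of the image of the weight map $\mathrm{Pic}(M^{gs}_{par}(r,L))\to\mathbb{Z}$, which by Proposition~\ref{picMpar} and the arithmetic you carry out is exactly $m$.

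The one place your proof diverges from the paper's is the lower bound. You obtain it by base change to $\mathbb{C}$ and Theorem~\ref{brgpC}; this is valid whenever that theorem applies, but Theorem~\ref{brgpC} carries the extra hypothesis ``if $g=2=r$ then $d$ is odd'', which Proposition~\ref{exponent} does not impose. The paper's route avoids this edge case: since by Proposition~\ref{picMpar} the Picard group of the gerbe is generated by line bundles of weights $\chi$, $r$, $c^x_j$, every line bundle has weight divisible by $m$, and hence (via the converse direction in Hoffmann's lemmas) no positive $k<m$ can annihilate $\beta$. Your descent concern at the end is legitimate but standard, and is precisely what the cited lemmas handle over an arbitrary base field.
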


\begin{proof}
This follows from Proposition \ref{picMpar} and \cite[Lemma 3.10(v), Lemma 3.9]{H}. 
\end{proof}

\subsection{The Leray spectral sequence}

Let $$\psi\,:\, U^{'s}_{Y, par}(r, \,L)_{\mathbb C} \,=\, U^{'s}_{Y_{\mathbb C}, par}(r,\, L_{\mathbb C})
\,\longrightarrow \,U^{'s}_{Y, par}(r,\, L)$$ be the projection. Let
$$\psi_{p,2}\,:\, {\rm Br}(U^{' s}_{Y, par}(r,\, L)) \,\longrightarrow\, {\rm Br}(U^{' s}_{Y_{\mathbb C}, par} (r,\, L_{\mathbb C}))$$
be the homomorphism induced by $\psi$. Since $\psi_{p,2}(\beta) \,=\, \beta_{\mathbb C}$, the map $\psi_{p, 2}$ is surjective.

Let
$$f\,:\, U^{' s}_{Y, par}(r,\, L) \,\longrightarrow \,\mathbb{ R}$$
be the structure morphism. The Leray spectral sequence associated to it is defined by
$$
E^{p,q}_2 \,\,=\,\, H^p_{\rm et} (\mathbb{R},\, R^q f_* \mathbb{G}_m) \,\Rightarrow\, 
H^{p+q}_{\rm et} (U^{' s}_{Y, par}(r,\, L),\, \mathbb{G}_m)\, .
$$
It gives a short exact sequence in lower terms 
\begin{equation} \label{spectral}
H^1\, \longrightarrow\, E^{0, 1}_2 \, \longrightarrow\, E^{2, 0}_2 \,\longrightarrow\,
{\rm kernel}\{H^2\, \longrightarrow\, E^{0, 2}_2\}\, \longrightarrow\, E^{1,1}_2\, .
\end{equation}

By Corollary \ref{functions}, $H^0(U^{' s}_{Y_{\mathbb C}, par}(r, \,L)_\mathbb{C},\, \mathcal{O})\,=\, \mathbb{C}$. 
It follows that $H^0(U^{' s}_{Y_{\mathbb C}, par}(r, \,L)_\mathbb{C}, \,\mathcal{O}^*)\,= \,\mathbb{C}^*$.
The natural map $$\mathbb{G}_m \,\longrightarrow\, f_*f^* \mathbb{G}_m \,=\, f_* \mathbb{G}_m$$ is an isomorphism, and hence
$f_* \mathbb{G}_m \,=\, \mathbb{G}_m$.

We have $R^1 f_*\mathbb{G}_m \,=\, \ {\rm Pic}(U^{' s}_{Y_{\mathbb C}, par}(r,\, L)_{\mathbb C})$ and
$R^2 f_*\mathbb{G}_m \,=\, {\rm Br}(U^{' s}_{Y, par}(r,\, L)_{\mathbb C})$. Also, $$E^{1,1}_2 \,= \,H^1_{et} (\mathbb{R},\, \mathbb{Z})
\,=\,0$$ 
(see \cite[Proof of Theorem 3.3]{BHHS} for details). Therefore the spectral sequence \eqref{spectral} gives an exact sequence 
\begin{equation}\label{es}
{\rm Pic}(U^{' s}_{Y, par}(r,\, L)) \ \stackrel{\psi_{p,1}}{\longrightarrow} \ {\rm Pic}(U^{' s}_{Y_{\mathbb C}, par}(r, \,L_{\mathbb C}))
\ \longrightarrow\ {\rm Br}({\mathbb R})
\end{equation}
$$ \stackrel{f^*}{\longrightarrow} \ {\rm Br}(U^{' s}_{Y, par}(r,\, L))\ \stackrel{\psi_{p,2}}{\longrightarrow}\
{\rm Br}(U^{' s}_{Y_{par}}(r,\, L)_{\mathbb C}) \ \longrightarrow\ 0\, .$$

By Theorem \ref{brgpC}, ${\rm Br}(U^{' s}_{Y_{\mathbb C}, par}(r,\, L_{\mathbb C})) \,\cong\, {\mathbb Z}/m{\mathbb Z}$ and
it is generated by $\beta_{\mathbb C}$. Also, we have ${\rm Br}(\mathbb{R}) \,=\, \mathbb{Z}/ 2\mathbb{Z}$; the non-trivial
element $[\mathbb{H}]$ of it is the class of the quaternion algebra $\mathbb{H}$. Hence the exact sequence in \eqref{es} becomes 
\begin{equation}\label{seq2}
0 \,\, \longrightarrow \ {\rm Cokernel} \,\, \psi_{p,1} \,\, \longrightarrow\,\, \mathbb{Z}/ 2\mathbb{Z}
\,\, \stackrel{f^*}{\longrightarrow}\,\, {\rm Br}(U^{' s}_{Y, par}(r,\, L))\,\, \stackrel{\psi_{p,2}}{\longrightarrow}
\,\, {\mathbb Z}/m{\mathbb Z} \,\, \longrightarrow\,\, 0\, .
\end{equation} 

Note that $m$ defined in \eqref{m} can also be written as 
$$ m \,=\, {\rm g.c.d.} (r,\, \chi, \,\{c^x_2,\, \cdots, \,c^x_{l_x}\}_{x\in I} ) \, .$$

\begin{theorem}\label{brauergroup}
Assume that either $g \,\ge\, 3$ or $g\, =\, 2,\, r \,\ge\, 3$. Then 
$${\rm Br}(U^{' s}_{Y, par}(r,\, L)) \ = \ \mathbb{Z} \beta \oplus f^*({\rm Br}(\mathbb{R})) \ \cong \ 
\frac{\mathbb{Z}}{m {\mathbb Z}} \oplus \frac{\mathbb Z}{2 \mathbb{Z}}\, .$$
\end{theorem}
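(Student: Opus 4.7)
The plan is to extract the desired description from the exact sequence \eqref{seq2} obtained from the Leray spectral sequence for $f\colon U^{'s}_{Y, par}(r,L)\to\mathbb{R}$. Two pieces of information remain to be assembled: the cokernel of the Picard comparison map $\psi_{p,1}$, and the behavior of the distinguished class $\beta$ under $\psi_{p,2}$.

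For the first piece, I would invoke Theorem \ref{picUYspar} together with Remark \ref{picULpar}. Under the hypothesis $g\ge 3$ or $(g,r)=(2,r)$ with $r\ge 3$, both ${\rm Pic}(U^{'s}_{Y, par}(r,L))$ and ${\rm Pic}(U^{'s}_{Y_{\mathbb C}, par}(r,L_{\mathbb C}))$ admit the same description: elements of the form \eqref{c11} modulo the relation \eqref{c12}, with generators $L(det)_Y,\ \widetilde{L}$ and $\xi^x_j$. Crucially, these generators are all real line bundles, so they lie in the image of $\psi_{p,1}$. Hence $\psi_{p,1}$ is surjective, ${\rm Cokernel}\,\psi_{p,1}=0$, and the exact sequence \eqref{seq2} collapses to the short exact sequence
$$
0\,\longrightarrow\,\mathbb{Z}/2\mathbb{Z}\,\stackrel{f^*}{\longrightarrow}\,{\rm Br}(U^{'s}_{Y, par}(r,L))\,\stackrel{\psi_{p,2}}{\longrightarrow}\,\mathbb{Z}/m\mathbb{Z}\,\longrightarrow\,0.
$$

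For the second piece, I would combine Proposition \ref{exponent} with Theorem \ref{brgpC}. By Proposition \ref{exponent}, the class $\beta$ has exponent exactly $m$, so the cyclic subgroup $\mathbb{Z}\beta$ is isomorphic to $\mathbb{Z}/m\mathbb{Z}$. Since $\psi_{p,2}(\beta)=\beta_{\mathbb C}$ and $\beta_{\mathbb C}$ generates ${\rm Br}(U^{'s}_{Y_{\mathbb C}, par}(r,L_{\mathbb C}))\cong \mathbb{Z}/m\mathbb{Z}$ by Theorem \ref{brgpC}, the restriction $\psi_{p,2}\vert_{\mathbb Z \beta}\colon \mathbb{Z}/m\mathbb{Z}\to \mathbb{Z}/m\mathbb{Z}$ is a surjection between finite groups of the same order, hence an isomorphism. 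Therefore $\mathbb{Z}\beta\cap \ker \psi_{p,2}=0$, i.e., $\mathbb{Z}\beta\cap f^*({\rm Br}(\mathbb R))=0$, and $\mathbb{Z}\beta$ provides a canonical section of $\psi_{p,2}$. This yields the internal direct sum decomposition
$$
{\rm Br}(U^{'s}_{Y, par}(r,L))\,=\,\mathbb Z\beta\oplus f^*({\rm Br}(\mathbb R))\,\cong\,\frac{\mathbb{Z}}{m\mathbb{Z}}\oplus\frac{\mathbb{Z}}{2\mathbb{Z}}.
$$

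No substantial obstacle remains once the above two pieces are in place; the only delicate ingredient is the surjectivity of $\psi_{p,1}$, which is precisely where the codimension hypothesis ($g\ge 3$, or $g=2$ with $r\ge 3$) enters through Theorem \ref{picUYspar} and Corollary \ref{codimMpargs}. Injectivity of $f^*$ follows immediately from the vanishing of the cokernel of $\psi_{p,1}$, so $f^*({\rm Br}(\mathbb R))$ is a copy of $\mathbb Z/2\mathbb Z$, and the splitting produced by $\mathbb Z\beta$ delivers the stated isomorphism.
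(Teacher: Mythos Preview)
Your proposal is correct and follows essentially the same approach as the paper: you use Theorem \ref{picUYspar} and Remark \ref{picULpar} to kill the cokernel of $\psi_{p,1}$, and then the equality of exponents of $\beta$ and $\beta_{\mathbb C}$ (Proposition \ref{exponent} and Theorem \ref{brgpC}) to split the resulting short exact sequence via $\mathbb{Z}\beta$. Your write-up is slightly more explicit about why the splitting occurs, but the argument is the same.
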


\begin{proof}
The proof is on similar lines as that of \cite[Theorem 1.3]{B6}, \cite[Theorem 3.3]{BHHS}.
We first make an observation. By \eqref{seq2}, the group ${\rm Cokernel} \ \psi_{p,1} \,\subset\, \mathbb{Z}/ 2\mathbb{Z}$. Hence if
it is nonzero, then ${\rm Cokernel} \ \psi_{p,1} \,=\, \mathbb{Z}/ 2\mathbb{Z}$. 

By Theorem \ref{picUYspar} and Remark \ref{picULpar}, we have $\ {\rm Cokernel} \ \psi_{p,1} \,=\, 0$. Since both $\beta$ and $\beta_{\mathbb C}$ have the same exponent $m$ (Proposition \ref{exponent}(1)), the sequence \eqref{seq2} splits so that 
$${\rm Br}(U^{' s}_{Y, par}(r,\, L)) \,= \,\mathbb{Z} \beta \oplus f^*({\rm Br}(\mathbb R))
\,= \,\mathbb{Z}/ m\mathbb{Z} \oplus \mathbb{Z}/ 2\mathbb{Z}\, .$$
This completes the proof.
\end{proof}

\section*{Acknowledgements} 

We are very grateful to the referee for pointing out a crucial error in an earlier version, and also for providing numerous suggestions
to improve the exposition.
This work was done during the tenure of the first author as INSA Senior Scientist in Indian Statistical Institute, Bangalore.
A part of this work was done during a visit of the first author to TIFR, Mumbai to attend Discussion Meeting on vector bundles in 
March-April 2022, she would like to thank T.I.F.R. for hospitality. The second author is partially supported by a
J. C. Bose Fellowship.


\end{document}